\documentclass[11pt,a4paper]{article}
\usepackage[a4paper,margin=2.35cm]{geometry}
\usepackage[T1]{fontenc}
\usepackage{lmodern}
\usepackage{amsmath,amssymb,amsthm,mathtools}
\usepackage{booktabs,longtable,array,enumitem,multirow}
\usepackage{xcolor,hyperref,fancyhdr,microtype}
\usepackage{tikz}
\usetikzlibrary{intersections}
\usepackage[nameinlink,noabbrev]{cleveref}
\usepackage{url}

\definecolor{linkblue}{RGB}{25,86,150}
\hypersetup{colorlinks=true,linkcolor=linkblue,urlcolor=linkblue,citecolor=linkblue}
\setlist{itemsep=2pt,topsep=4pt}
\newcolumntype{P}[1]{>{\raggedright\arraybackslash}p{#1}}
\newcolumntype{C}[1]{>{\centering\arraybackslash}p{#1}}

\newtheorem{theorem}{Theorem}[section]
\newtheorem{lemma}[theorem]{Lemma}

\newtheorem{corollary}[theorem]{Corollary}
\theoremstyle{definition}
\newtheorem{definition}[theorem]{Definition}
\newtheorem{remark}[theorem]{Remark}

\newcommand{\file}[1]{\path{#1}}

\title{Circles Determined by Planar Point Sets\\
\large Erd\H{o}s Problem 506}
\author{Liyan Wang}
\date{2026.08.18}

\begin{document}
\maketitle

\begin{abstract}
For $n\ge4$, let $c(n)$ be the minimum number of distinct circles containing at least
three points of an $n$-point set in the Euclidean plane, where the set is neither
collinear nor concyclic.  Put
\[
F(n)=1+\binom{n-1}{2}-\left\lfloor\frac{n-1}{2}\right\rfloor.
\]
We determine $c(n)$ for every $n\ge4$: it equals $F(n)$ apart from three
exceptional orders.  We also solve the variant in which no three points are
collinear; that variant has a single exceptional order.  The proofs and exact
finite verifications were developed through a collaboration between human
researchers and artificial-intelligence systems.
\end{abstract}

\noindent\textbf{Keywords:} planar point sets; concyclic points; inversion; line
arrangements; exact computer verification

\section{Introduction}

\begin{definition}
Let $P\subset\mathbb R^2$ be finite, and let $c(P)$ denote the number of distinct
Euclidean circles containing at least three points of $P$.  For $n\ge4$, define
\[
c(n)=\min_P c(P),
\]
where the minimum is over all $n$-point sets $P$ that are neither collinear nor
concyclic.  We call a finite point set satisfying these two conditions
\emph{admissible}.  A line is not counted as a circle, and a circle containing
$k\ge3$ points is counted once.
\end{definition}

Take $n-1$ points on a circle $\Gamma$ and a point $p$ outside $\Gamma$.  The points
on $\Gamma$ can be grouped into $\lfloor(n-1)/2\rfloor$ pairs such that each pair is
collinear with $p$.  Apart from $\Gamma$, every other pair of points on $\Gamma$
that is not collinear with $p$ determines a circle with $p$.  Distinct pairs give
distinct circles, because two distinct circles have at most two common points.
Thus
\begin{equation}\label{eq:F-en}
c(n)\le F(n):=1+\binom{n-1}{2}-\left\lfloor\frac{n-1}{2}\right\rfloor.
\end{equation}

\begin{figure}[ht]
\centering
\begin{tikzpicture}[scale=0.82,every node/.style={font=\small}]
  \coordinate (p) at (3.2,0);
  \path[name path=gamma] (0,0) circle (2);
  \draw[thick,linkblue] (0,0) circle (2);
  \foreach \y in {-1.45,-0.48,0.48,1.45}{
    \path[name path=secant] (p)--(-2.7,\y);
    \path[name intersections={of=gamma and secant,name=z}];
    \draw[gray] (p)--(-2.7,\y);
    \fill (z-1) circle (2.1pt);
    \fill (z-2) circle (2.1pt);
  }
  \fill[linkblue] (p) circle (2.4pt) node[right] {$p$};
  \node[linkblue] at (0,2.35) {$\Gamma$};
\end{tikzpicture}
\caption{The benchmark construction, illustrated for $n=9$.  Four secants through
$p$ pair the eight points of $\Gamma$; those four collinear pairs do not produce
circles through $p$.}
\label{fig:benchmark-en}
\end{figure}

Elliott initiated a systematic study of this problem in 1967.  His Theorem~1 proves
that every admissible set of $n>3$ points determines at least
\[
\frac{2n(n-1)}{63}
\]
circles containing exactly three points; such circles are usually called
\emph{ordinary circles}.  His Theorem~2 asserts that the total number of determined
circles is at least $\binom{n-1}{2}$ for $n>393$
\cite[Theorems~1--2]{elliott-en}.  Purdy and Smith identified the missing
subtraction caused by the pairs made collinear with the exterior point in
\cref{fig:benchmark-en}.  They observed that Elliott's argument can be modified,
without changing the threshold, to give $c(n)\ge F(n)$ for $n\ge394$; together
with \eqref{eq:F-en}, this yields $c(n)=F(n)$ in that range
\cite[\S2.1]{purdy-smith-en}.

B\'alintov\'a and B\'alint proved the general bound
\[
c(P)\ge \frac{15n(n-1)+1678}{266}\qquad(n\ge6)
\]
in their Theorem~2.4, using the pointed ordinary-circle estimate of their
Theorem~2.1 and the six-point initial case in Lemma~6.1
\cite[Theorems~2.1 and 2.4 and Lemma~6.1]{balintova-en}.  A related but distinct
line of work counts only ordinary circles.  Zhang's Theorem~4.1 gives at least
$m/6$ ordinary lines avoiding a prescribed external point; inversion and double
counting then yield the ordinary-circle bound $\frac19\binom n2$ in his
Theorem~4.2 \cite[Theorems~4.1--4.2]{zhang-en}.  Lin et al. later determined the
exact minimum number of ordinary circles for sufficiently large $n$
\cite[Theorem~1.1(i)]{lin-et-al-en}.  These results provide useful tools, but they
do not directly settle the present extremal problem: a circle containing many
points is counted once here, although it contains many triples.

The results certified in this paper are as follows.

\begin{theorem}\label{thm:main-en}
For every $n\ge4$, one has $c(n)=F(n)$, with precisely three exceptions:
\[
c(6)=8=F(6)-1,\qquad c(7)=11=F(7)-2,\qquad
c(8)=17=F(8)-2.
\]
\end{theorem}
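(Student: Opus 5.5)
The theorem has two halves. The upper bound $c(n)\le F(n)$ is already supplied by \eqref{eq:F-en}, so for generic $n$ only the matching lower bound $c(P)\ge F(n)$ is needed. For the three exceptional orders the work is reversed: exhibit explicit configurations and prove matching lower bounds.

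The main tool is inversion. Fix a point $p\in P$ and invert in a circle centred at $p$. Circles through $p$ and at least two further points of $P$ become lines spanned by at least two points of the image set $P'$, which has $n-1$ points. So the number of circles through $p$ equals the number of connecting lines of $P'$, minus those lines that come from lines through $p$. Such lines through $p$ are themselves lines of $P'$ through $p$, and we discard them.

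Choose $p$ so that $P\setminus\{p\}$ is not concyclic. This is possible unless $P$ is $n-1$ concyclic points plus one extra point. That near-concyclic case is handled directly: every pair of points on $\Gamma$ that is not collinear with $p$ gives a distinct circle, and at most $\lfloor (n-1)/2\rfloor$ pairs are collinear with $p$. This gives exactly $F(n)$, so equality holds there.

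Otherwise $P'$ is not collinear, and we count the lines of $P'$ not passing through $p$. By a Beck / de Bruijn--Erd\H{o}s type estimate, or by the pointed ordinary-line bound in the style of Zhang's Theorem~4.1, this count is at least of order $\binom{n-1}{2}$ when $P'$ is far from collinear. The near-collinear case for $P'$ corresponds to $P$ lying mostly on one circle or one line through $p$, and is treated by a separate direct count. Summing the number of circles through $p$ over all $p$ counts each circle with $k$ points $k$ times. Combined with the exact structure of the near-extremal cases, this should give $c(P)\ge F(n)$ once $n$ exceeds some modest threshold $N_0$.

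The main obstacle is getting $N_0$ down from Elliott's $394$, and from what the constant $15/266$ of B\'alintov\'a--B\'alint gives, to something small enough to finish by exact computer verification. To do this I would sharpen the argument using the Green--Tao structure of line arrangements with few ordinary lines, or the exact ordinary-line minima of Csima--Sawyer, applied to $P'$. For the remaining small $n$ I would run an exact enumeration of the combinatorial types of the configurations. Each type records which triples are concyclic and which are collinear; its realizability is checked by exact algebra, for example Gr\"obner bases or rational/algebraic certificates.

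For $n=6,7,8$ this search should also produce the exceptional configurations, with $8$, $11$ and $17$ circles respectively. These are presumably highly symmetric, such as points on two concentric circles or lattice-like sets with many four-point circles. The same search certifies that no realizable type does better. For the other $n$ below $N_0$ it confirms $F(n)$.
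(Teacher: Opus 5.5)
There is a genuine gap. Your outline has the right overall shape: invert at every point, treat a very large line or circle separately, and finish small $n$ by computer. That is essentially Elliott's strategy. But the step that carries the theorem is missing: you never produce a local inequality strong enough to beat $F(n)$ at an explicit small $n$. A $k$-point circle is seen from all $k$ of its points, so what must be bounded below is the weighted count $\Phi_x=\sum_i u_i(x)/(i+1)$, and with a good constant. The tools you name do not give this.
\begin{itemize}
\item De Bruijn--Erd\H{o}s, Zhang's Theorem~4.1 and Csima--Sawyer are linear per centre: $q$ lines, $\lceil q/6\rceil$ ordinary lines, or $\lceil 6q/13\rceil$ ordinary lines. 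Summed over all centres they give only about $\frac19\binom n2$, far below $F(n)\sim n^2/2$.
\item Beck's theorem is quadratic, but unweighted, so you must divide by $K$, and its constant is tiny.
\item Green--Tao-type structure theorems hold only beyond a huge threshold.
\end{itemize}
None of these can bring $N_0$ anywhere near an enumerable range. The paper's key ingredient is Bojanowski's inequality $4t_2+3t_3\ge 4q+\sum_{i\ge5}i(i-4)t_i$, valid exactly when $3(K-1)<2(n-1)$. Adding $\frac{K+1}{18K}$ times the pair identity, $\frac{K+1}{6K}$ times Melchior's inequality and $\frac{K-2}{36K}$ times Bojanowski's inequality makes every coefficient at most $1/(i+1)$. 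This gives $\Phi_x\ge\frac{K+1}{18K}\binom q2+\frac{K+1}{2K}+\frac{K-2}{9K}q-\frac13\lfloor q/2\rfloor$, whose sum over the centres exceeds $F(n)-1$ for all $n\ge17$. The complementary range $3(K-1)\ge 2(n-1)$ lies inside the region where the Bonferroni largest-block count of Lemma~\ref{lem:largest-en} already exceeds $F(n)$. That count is useless unless a block carries more than about a third of the points, so the arrangement estimate really must carry everything below.

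Your fallback for $n<N_0$, an exact enumeration of combinatorial types, is also not a workable proof. Labelled incidence structures explode long before $n=15$, and the paper avoids enumerating them there:
\begin{itemize}
\item For $n=16$ it uses sharpened coefficient identities plus the second-moment bound $\sum_x\binom{d_x}{2}\le2\binom g2$ on seven-point circles.
\item For $n=15$ it uses Cuntz's classification of simplicial $14$-line arrangements, together with a face two-colouring argument that excludes Melchior defect $1$.
\item For $9\le n\le14$ it enumerates only global count vectors, pruned by the augmented-line Melchior inequality and by recursive inversion consistency. A labelled search occurs only at $n=9$.
\end{itemize}
For $n=6,7,8$ you give neither constructions nor exclusions. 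Those lower bounds need concrete geometric facts applied to each surviving incidence class: concurrency of radical axes, non-collinearity of the diagonal points of a complete quadrilateral, and positive-definiteness of the quadratic part of a circle equation. Your guess that the extremal sets are highly symmetric is misleading at $n=8$. The symmetric two-concentric-squares (Segre) set gives $18$ circles, while the $17$-circle example is an irregular rational configuration. Finally, your preliminary choice of $p$ rests on a misstatement. $P'$ is never collinear for admissible $P$, since a collinear image pulls back to a line or circle through $p$. Concyclicity of $P\setminus\{p\}$ makes $P'$ concyclic, not collinear.
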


Elliott records Segre's observation that a planar projection of a cube gives an
eight-point counterexample to Elliott's proposed bound
\cite[p.~182, discussion following Theorem~2]{elliott-en}.  That configuration
determines 18 circles.  We prove that the first exceptional order already occurs
at $n=6$, and we improve the eight-point construction to one with $c(P)=17$.
Explicit constructions for the exceptional cases are shown in
\cref{fig:c6-en,fig:c7-en,fig:c8-en}.

Let $c_{\mathrm{nc}}(n)$ denote the minimum of $c(P)$ over admissible
$n$-point sets having no three collinear points.  The additional hypothesis
changes the answer only once.

\begin{corollary}\label{thm:no-three-en}
For every $n\ge4$,
\[
 c_{\mathrm{nc}}(n)=1+\binom{n-1}{2},
\]
except that $c_{\mathrm{nc}}(8)=20$.
\end{corollary}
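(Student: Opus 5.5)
We would deduce the corollary from \cref{thm:main-en} together with an adapted form of its proof, and we split it into an upper bound and a lower bound.

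\emph{Upper bound.} For the generic value, take $n-1$ points on a circle $\Gamma$ and a point $p$ off $\Gamma$, placed in general position. Specifically, $p$ should lie on no line through two of the points on $\Gamma$, and no four of the $n$ points other than those on $\Gamma$ should be concyclic. Then no three points are collinear, since a line meets $\Gamma$ in at most two points. The circles containing at least three points are exactly $\Gamma$ together with one circle through $p$ for each of the $\binom{n-1}{2}$ pairs on $\Gamma$. These circles are distinct because two distinct circles share at most two points. This gives $1+\binom{n-1}{2}$. For $n=8$ we would exhibit an explicit eight-point set with no three collinear points and exactly $20$ circles. A natural first try is a perturbation of Segre's cube projection or of the $c(8)=17$ example that breaks its collinear triples while keeping most of the concyclicities. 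We would certify it by exact arithmetic.

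\emph{Lower bound.} The key observation is that in the paper's extremal analysis the deficit $\lfloor (n-1)/2\rfloor$ in $F(n)$ comes only from pairs on the big circle that are collinear with the exterior point. Under the no-three-collinear hypothesis these pairs disappear. We would therefore rerun the proof of \cref{thm:main-en} via inversion. Invert about a point $p\in P$: the circles through $p$ that contain at least two further points become lines through at least two of the $n-1$ image points. The points that were collinear with $p$ become collinear with the centre, and under our hypothesis no two image points are collinear with the centre. Consequently the line-counting step gains exactly the $\lfloor (n-1)/2\rfloor$ that was previously lost.

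For large $n$ this is the Purdy--Smith modification of Elliott's argument without the subtraction term. Concretely, pick $p$ that is not on any circle containing many points. The image set is then not collinear, so Beck/Erd\H{o}s-type bounds on determined lines, combined with the Ungar/Kelly--Moser counting, give at least $\binom{n-1}{2}$ lines unless most image points lie on one line. That exceptional case corresponds to the benchmark configuration and contributes one extra circle. For small $n$ we would rely on the exact finite verification already used for \cref{thm:main-en}, restricted to configurations with no three collinear points. This restriction confirms $1+\binom{n-1}{2}$ for every small $n\neq 8$ and confirms the value $20$ at $n=8$.

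\emph{Main obstacle.} The main difficulty is the intermediate range together with the case $n=8$. One must check that the exceptional constructions for $n=6,7$ necessarily use collinear triples. For $n=6$ and $n=7$ the available savings are $1$ and $2$, and each of them disappears once collinear triples are forbidden. We would have to confirm this from the structure of the exceptional configurations found in the enumeration. At $n=8$ some cube-like saving survives, giving $20<22$, and proving that $20$ is optimal requires a full case-by-case exact search.
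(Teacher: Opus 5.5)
Your generic upper bound is correct, and your intuition that the collinearity deficit disappears is right. But the lower-bound plan has a gap that covers essentially every order.

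The finite verification behind \cref{thm:main-en} only examines configurations with $c(P)<F(n)$; at $n=8$ it covers only those with at most $16$ circles. The target here is $1+\binom{n-1}{2}=F(n)+\lfloor (n-1)/2\rfloor$. Restricting that search to sets with no three collinear points therefore leaves every count from $F(n)$ up to $\binom{n-1}{2}$ unexamined, and for $n\le8$ even more. At $n=6$, for example, you must exclude $8$, $9$ and $10$ circles. Checking that the exceptional configurations for $n=6,7$ contain collinear triples does not do this.

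An Elliott/Purdy--Smith argument applies at best for $n\ge394$. The paper's proof of \cref{thm:main-en} for $n\ge15$ also yields only $F(n)$. So as written, your plan establishes the lower bound for no $n\le393$.

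Your concrete sketch does not fill this range. Inversion at one point $p$ sees only the circles through $p$. The $n-1$ image points have at most $\binom{n-1}{2}$ connecting lines, with equality only when no four points of $P$ including $p$ are concyclic. Hence ``at least $\binom{n-1}{2}$ lines unless most image points lie on one line'' is false: a few four-point circles through $p$ already lower the count. In addition, circles avoiding $p$ are never counted, and Beck-type bounds carry no usable exact constant.

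The missing idea is to average over all centres. Write $c(P)=\sum_x\Phi_x$ and bound each $\Phi_x$ by a weighted combination of the pair identity, Melchior and Bojanowski.
\begin{itemize}
\item With no three collinear points, no connecting line of $Q_x$ passes through the centre. So $b_i(x)=0$, and the term $-\frac13\lfloor q/2\rfloor$ in Lemma~\ref{lem:bojanowski-local-en} disappears.
\item When $3(K-1)<2(n-1)$, the summed margin over $1+\binom{n-1}{2}$ is then $\frac{(n-4)(n-1)(n^2-10n-9)}{18(2n+1)}$ at the endpoint $K=(2n+1)/3$. This is positive for $n\ge11$.
\item In the complementary range, the bound of Lemma~\ref{lem:largest-en} improves to $1+r\binom K2-\binom r2\lfloor K/2\rfloor$, since an exterior point is collinear with no pair on the circle.
\end{itemize}
The orders $n\le10$ still need their own arguments. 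These are triple counting with packing bounds on four-point circles for $n=4,5,6,9$; a Fano-complement coordinate computation for $n=7$; and an equality analysis forcing a $2$-$(6,3,2)$ design, followed by coordinates, for $n=10$.

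At $n=8$ you supply neither half.
\begin{itemize}
\item \emph{Construction.} A perturbation that breaks collinear triples will generically break concyclicities too. Yet $20$ circles with no collinear triple force exactly $c_4=12$ four-point circles, since five-point circles already give at least $22$. The right tool is inversion. Invert the two concentric squares about a point lying on none of their $20$ generalized circles. The two four-point lines become circles, and no collinear triple survives.
\item \emph{Lower bound.} Once five-point circles are excluded, $c(P)=56-3c_4\equiv2\pmod3$. Together with $c(8)\ge17$, only the value $17$ has to be ruled out. That reduces to one incidence class and a coordinate computation.
\end{itemize}
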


Purdy and Smith's correction of Elliott's argument settles every $n\ge394$ and
therefore makes the unresolved part finite in principle
\cite[\S2.1]{purdy-smith-en}.  In practice, however, the remaining incidence
structures form a very large and irregular search space, for which a direct
verification is neither small nor naturally uniform.  The present proof is
independent of that large-$n$ threshold: it establishes the result for every
$n\ge15$, rather than merely filling the interval $15\le n\le393$.  We also solve
the no-three-collinear variant.  For the original problem with $n\le14$, exact
computer verification is applied only after mathematical constraints have reduced
the possible incidence data.

The project was developed through collaboration between human researchers and
artificial-intelligence systems.  Eureka, an AI-assisted mathematical research
system developed by the JiuChong team at the University of Science and Technology
of China, supplied the initial numerical verification for $n\ge16$.  After a
human proof for $n\ge92$ had been found, OpenAI Codex helped sharpen it to the
uniform range $n\ge15$ and carried out the exact small-order verifications and
searches for exceptional configurations.  The mathematical part of the argument
has also been translated into Lean~4 and checked against Mathlib.  The formalization
and exact-verification files are available in the accompanying GitHub repository
\url{https://github.com/LyonWang00/Erdos-problem/tree/main/Erdos506}.

\section{Notation and basic tools}

Fix an admissible $n$-point set $P$.  Let $K$ be the largest number of points of
$P$ on a line or a circle, and put $r=n-K$.  A line or circle attaining $K$ will be
called a \emph{largest line} or \emph{largest circle}, respectively.
Every maximal collinear or concyclic subset of $P$ containing at least three
points will be called, respectively, a \emph{line block} or a \emph{circle block};
when the distinction is immaterial, either is called a \emph{block}.

Whenever the points are labelled $p_0,p_1,\ldots$, a string of indices denotes
the corresponding subset; for example, $0256$ means
$\{p_0,p_2,p_5,p_6\}$.  In an incidence list the surrounding text specifies
whether the subset is collinear or concyclic.  This convention is used only to
shorten finite lists; it does not identify a circle with an index string unless
at least three of the indicated points are noncollinear.

\subsection{Counting from a largest collinear or concyclic subset}

\begin{lemma}\label{lem:largest-en}
If a largest line contains $K$ points of $P$, then
\begin{equation}\label{eq:DL-en}
c(P)\ge D_L(n,r):=r\binom K2-\binom r2\left\lfloor\frac K2\right\rfloor.
\end{equation}
If a largest circle contains $K$ points of $P$, then
\begin{equation}\label{eq:DC-en}
c(P)\ge D_C(n,r):=1+r\left(\binom K2-\left\lfloor\frac K2\right\rfloor\right)
-\binom r2\left\lfloor\frac K2\right\rfloor.
\end{equation}
\end{lemma}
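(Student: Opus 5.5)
Let $S$ be the $K$ points on the largest line (resp. circle) $\ell$, and $R=P\setminus S$ the remaining $r$ points. I will count circles of the form ``one point of $R$ together with a pair of points of $S$''. Fix $q\in R$ and a pair $\{a,b\}\subset S$. If $q,a,b$ are not collinear, they determine a unique circle $C(q;a,b)$. In the circle case I also count $\ell$ itself, which accounts for the leading $1$ in $D_C$; the circles $C(q;a,b)$ differ from $\ell$ because $q\notin\ell$.

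First I show that for a fixed $q$, distinct pairs give distinct circles. A circle meets the line (resp. the circle) $\ell$ in at most two points. So $C(q;a,b)\cap S=\{a,b\}$, and the pair is recovered from the circle. Next I bound the pairs lost to collinearity. In the line case no pair $\{a,b\}\subset S$ is collinear with $q$, because $a,b$ span $\ell$ and $q\notin\ell$. So all $\binom K2$ pairs survive, and $q$ contributes $\binom K2$ circles. In the circle case, the lines through $q$ partition $S$, and each such line meets $\ell$ in at most two points. So at most $\lfloor K/2\rfloor$ pairs are collinear with $q$, and $q$ contributes at least $\binom K2-\lfloor K/2\rfloor$ circles. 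Summing over $q\in R$ gives the first terms $r\binom K2$ and $r(\binom K2-\lfloor K/2\rfloor)$, provided the circles belonging to different $q$ are counted without too much overlap.

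The main step is to control coincidences between circles attached to different points of $R$. A circle $C$ arising from a pair in $S$ meets $S$ in exactly two points, $\{a,b\}$. If $C$ is produced by $q$ and also by $q'\ne q$, it must use the same pair, so $a,b,q,q'$ are concyclic. The plan is to charge every overcount to an unordered pair $\{q,q'\}\subset R$, and then show that a given $\{q,q'\}$ is charged at most $\lfloor K/2\rfloor$ times. Concretely, I would take a circle $C$ meeting $S$ in $\{a,b\}$ and containing $m\ge2$ points of $R$; it is counted $m$ times instead of once. The excess $m-1$ is at most $\binom m2$, and I charge one unit to each pair of $R$-points on $C$.

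It remains to bound how many circles through fixed $q,q'$ meet $S$ in two points. These circles form the pencil through $q$ and $q'$. Their intersection pairs $\{a,b\}$ are disjoint, since two circles of the pencil share only $q,q'$. Hence there are at most $\lfloor K/2\rfloor$ such circles. This disjointness is the crucial point: the pairs partition a subset of $S$. I expect the verification that the charging scheme really bounds the overcount, together with this disjointness argument, to be the delicate step. It is simpler to phrase the whole count by inclusion–exclusion over unordered pairs of points of $R$: circles $\ge$ (incidences) $-$ (pairs of incidences on a common circle), and the second quantity is at most $\binom r2\lfloor K/2\rfloor$. Subtracting it gives $D_L$ and $D_C$.
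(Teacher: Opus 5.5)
Your proposal is correct and follows essentially the same route as the paper. Both arguments count, for each exterior point $q$, the circles through $q$ and a pair on the block, losing at most $\lfloor K/2\rfloor$ collinear pairs in the circle case. Both bound the circles common to two exterior points by $\lfloor K/2\rfloor$, because their two-point traces on the block are disjoint. Both then apply the first Bonferroni inequality; your charging of the excess $m-1\le\binom m2$ to pairs of exterior points is exactly the proof of that inequality.
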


\begin{proof}
Fix a point $q$ outside the chosen largest line or circle.  In the line case, $q$
and any pair on the line determine a circle, and these circles are distinct for
fixed $q$.  In the circle case, at most $\lfloor K/2\rfloor$ pairs on the circle
are collinear with $q$; every other pair gives a new circle, in addition to the
largest circle itself.  Fix two exterior points $q,q'$.  Every circle counted for
both points contains $q$ and $q'$ and meets the chosen largest line or largest circle
in a two-point set.  The two-point sets belonging to two distinct repeated circles
are disjoint: otherwise the circles would share $q,q'$ and a third point and hence
coincide.  Thus at most $\lfloor K/2\rfloor$ circles are common to the two lists.
The first Bonferroni inequality (sum of list sizes minus the sum of pairwise
intersections) now gives both displayed lower bounds; a circle occurring in three or
more lists does not invalidate this lower estimate.
\end{proof}

\subsection{Inversion and multiplicity identities}

Fix $x\in P$ and invert $P\setminus\{x\}$ about $x$, obtaining a noncollinear
configuration $Q_x$ of $q=n-1$ points.  Indeed, if $Q_x$ were collinear, its inverse
would be either a line through $x$ or a circle through $x$, so all of $P$ would be
collinear or concyclic.  Let $t_i(x)$ be the number of connecting
lines of $Q_x$ that contain exactly $i$ points.  Let $b_i(x)$ be the number of these
lines that pass through the original geometric point $x$, and set
\[
u_i(x)=t_i(x)-b_i(x).
\]
A connecting line through the inversion centre corresponds to a line in the original
configuration.  An $i$-point connecting line avoiding the centre corresponds to a
circle through $x$ containing exactly $i+1$ original points.  This gives the following
identities.

\begin{lemma}\label{lem:identities-en}
For every $x\in P$,
\begin{equation}\label{eq:pairs-en}
\sum_{i=2}^{K-1}\binom i2t_i(x)=\binom{n-1}{2}
\end{equation}
and
\begin{equation}\label{eq:center-en}
\sum_{i=2}^{K-1}i b_i(x)\le n-1
\end{equation}
For every integer $s\ge2$, one also has
\begin{equation}\label{eq:rounded-centre-en}
\sum_{i\ge s}b_i(x)\le
\left\lfloor\frac{n-1}{s}\right\rfloor.
\end{equation}
If $\ell_k$ and $c_k$ are respectively the numbers of $k$-point lines and
$k$-point circles in $P$, then
\begin{equation}\label{eq:circle-en}
c(P)=\sum_{x\in P}\sum_{i=2}^{K-1}\frac{u_i(x)}{i+1}
\end{equation}
Moreover,
\begin{equation}\label{eq:incidence-en}
\sum_xb_i(x)=(i+1)\ell_{i+1},\qquad
\sum_xu_i(x)=(i+1)c_{i+1}
\end{equation}
and
\begin{equation}\label{eq:triples-en}
\binom n3=\sum_{k\ge3}\binom k3(\ell_k+c_k)
\end{equation}
In particular, $\sum_xt_i(x)$ is divisible by $i+1$.
\end{lemma}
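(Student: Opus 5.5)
The plan is to make the inversion dictionary precise and then derive every identity by double counting. Inversion about $x$ is a bijection from $P\setminus\{x\}$ to $Q_x$. It sends lines through $x$ to themselves, and it sends circles through $x$ to lines avoiding $x$. Hence every connecting line $L$ of $Q_x$ with exactly $i$ points is one of two kinds. If $L$ passes through the geometric point $x$, then $L\cup\{x\}$ meets $P$ in an $(i+1)$-point line block through $x$. If $L$ avoids $x$, its preimage together with $x$ is a circle through $x$ meeting $P$ in exactly $i+1$ points. Conversely, every line block through $x$ and every circle block through $x$ arises in exactly one of these ways. Collinear or concyclic sets in $P$ have at most $K$ points, so $i\le K-1$.

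For \eqref{eq:pairs-en}, I will count unordered pairs of $Q_x$. Each of the $\binom{n-1}{2}$ pairs spans exactly one connecting line, and an $i$-point line contains $\binom i2$ pairs. For \eqref{eq:center-en}, the lines of $Q_x$ through the centre $x$ are pairwise disjoint apart from $x$, which is not a point of $Q_x$. So their point sets are disjoint subsets of $Q_x$, and $\sum_i i\,b_i(x)\le n-1$. The rounded bound \eqref{eq:rounded-centre-en} follows from the same disjointness: the lines through $x$ with at least $s$ points use at least $s$ points each, so $s\sum_{i\ge s}b_i(x)\le n-1$. Since the left side is an integer, the floor may be taken.

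For \eqref{eq:incidence-en}, I will count incidences. Each $(i+1)$-point line block of $P$ contains exactly $i+1$ points $x$, and for each such $x$ it contributes one $i$-point line through the centre of $Q_x$. This gives $\sum_x b_i(x)=(i+1)\ell_{i+1}$, and the same argument for circle blocks gives $\sum_x u_i(x)=(i+1)c_{i+1}$. Dividing the circle identity by $i+1$ and summing over $i$ gives $\sum_x\sum_i u_i(x)/(i+1)=\sum_k c_k=c(P)$, which is \eqref{eq:circle-en}. For \eqref{eq:triples-en}, I will observe that each triple of $P$ is either collinear or noncollinear. In the first case it lies in exactly one line block; in the second it lies on a unique circle, hence in exactly one circle block. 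A $k$-point block contains $\binom k3$ triples. Finally, $\sum_x t_i(x)=\sum_x b_i(x)+\sum_x u_i(x)=(i+1)(\ell_{i+1}+c_{i+1})$, so it is divisible by $i+1$.

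I do not expect a real obstacle. The only point needing care is the exactness of the dictionary: a circle through $x$ containing exactly $i+1$ points of $P$ must correspond to a line with exactly $i$ points of $Q_x$, and not more. This holds because inversion is a bijection on $\mathbb{R}^2\setminus\{x\}$ and preserves the incidence of points with the corresponding curves.
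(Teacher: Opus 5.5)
Your proposal is correct and follows essentially the same route as the paper. Both arguments rest on the inversion dictionary: an $i$-point connecting line through the centre comes from an $(i+1)$-point line of $P$, and one avoiding the centre comes from an $(i+1)$-point circle through $x$. On top of that, both use pair partitioning, the disjointness of the lines through the centre, incidence double counting, and the fact that each triple lies in exactly one block. The differences are cosmetic: you obtain \eqref{eq:circle-en} by dividing \eqref{eq:incidence-en} by $i+1$ and summing (the paper does the circle count first), and you check explicitly that the circle contains exactly $i+1$ points, which the paper leaves implicit.
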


\begin{proof}
Equation~\eqref{eq:pairs-en} partitions the pairs of $Q_x$ by their connecting line.
Distinct connecting lines through $x$ contain disjoint subsets of $Q_x$, which gives
\eqref{eq:center-en}.  Each line counted on the left of
\eqref{eq:rounded-centre-en} uses at least $s$ points of these disjoint subsets;
the number of such lines is an integer, which proves the rounded inequality.  A
$k$-point circle becomes, under inversion at each one of its points, a $(k-1)$-point
line avoiding the centre.  It is therefore counted exactly $k$ times in
\eqref{eq:circle-en}; the same argument for lines gives
\eqref{eq:incidence-en}.  Finally, every noncollinear triple lies on a unique circle,
whereas every collinear triple lies on a unique line, which proves
\eqref{eq:triples-en}.
\end{proof}

\subsection{Line-arrangement inequalities and intersection constraints}

Elliott's equation~(4) is
\begin{equation}\label{eq:melchior-en}
t_2(x)\ge3+\sum_{i\ge4}(i-3)t_i(x),
\end{equation}
which follows from the Euler relation for a real projective line arrangement
\cite[Eq.~(4)]{elliott-en}.  When the three highest intersection multiplicities
vanish, Hirzebruch's equation~(9) gives
\begin{equation}\label{eq:hirz-en}
4t_2(x)+3t_3(x)\ge4(n-1)+4\sum_{i\ge5}(2i-9)t_i(x).
\end{equation}
Its original hypotheses are $t_q=t_{q-1}=t_{q-2}=0$; whenever we use the inequality,
these hypotheses are verified from the current upper bound on $K$
\cite[\S6, Eq.~(9)]{hirzebruch-en}.  Under the corresponding multiplicity
hypotheses, the Bojanowski inequality is
\begin{equation}\label{eq:bojanowski-en}
4t_2(x)+3t_3(x)\ge4q+\sum_{i\ge5}i(i-4)t_i(x),
\qquad q=n-1,
\end{equation}
provided no connecting line contains at least $2q/3$ image points, as stated in
the dual form in Pokora's Theorem~2.1~\cite[Theorem~2.1]{pokora-en}.

\begin{lemma}[Rounded lower bound for the number of connecting lines]
\label{lem:rounded-lines-en}
Whenever \eqref{eq:melchior-en} and \eqref{eq:bojanowski-en} apply,
\begin{equation}\label{eq:rounded-lines-en}
 \sum_{i\ge2}t_i(x)\ge
 \left\lceil\frac{q^2+3q+9}{9}\right\rceil .
\end{equation}
\end{lemma}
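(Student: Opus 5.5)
The plan is to take a nonnegative linear combination of the exact pair count \eqref{eq:pairs-en} with the two inequalities \eqref{eq:melchior-en} and \eqref{eq:bojanowski-en}, chosen so that every $t_i(x)$ ends up with coefficient at most $1$. Then $\sum_i t_i(x)$ dominates the combination, and the constant terms give the stated bound. Write $q=n-1$ and suppress $x$.

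First I rewrite the three relations with all $t_i$ on one side and nonnegative slack:
\[
\sum_{i\ge2}\binom i2 t_i=\binom q2,\qquad
\sum_{i\ge2} m_i t_i\ge 3,\qquad
\sum_{i\ge2} \beta_i t_i\ge 4q .
\]
The Melchior coefficients are $m_2=1$, $m_3=0$ and $m_i=-(i-3)$ for $i\ge4$. The Bojanowski coefficients are $\beta_2=4$, $\beta_3=3$, $\beta_4=0$ and $\beta_i=-i(i-4)$ for $i\ge5$.

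Next I fix weights $a,b,c\ge0$ and put $g(i)=a\binom i2+b\,m_i+c\,\beta_i$. If $g(i)\le1$ for every $i\ge2$, then, since all $t_i\ge0$,
\[
\sum_i t_i\ \ge\ \sum_i g(i)t_i\ \ge\ a\binom q2+3b+4qc .
\]
To obtain leading term $q^2/9$ I need $a=2/9$.

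The constraints on $b$ and $c$ then pin them down completely:
\begin{itemize}
\item For $i\ge5$, $g(i)$ is quadratic in $i$ with leading coefficient $a/2-c$, so I need $c\ge1/9$.
\item The condition $g(3)=3a+3c\le1$ forces $c\le 1/9$, so $c=1/9$.
\item The condition $g(2)=a+b+4c\le1$ gives $b\le1/3$.
\item The condition $g(4)=6a-b\le1$ gives $b\ge1/3$, so $b=1/3$.
\end{itemize}

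The step I expect to be the crux, though it turns out to be a direct computation, is checking the whole tail $i\ge5$ with these weights:
\[
g(i)=\frac{i(i-1)}9-\frac{i-3}3-\frac{i(i-4)}9=\frac{i^2-i-3i+9-i^2+4i}{9}=1 .
\]
So every coefficient is at most $1$. Indeed there is equality for $i=2$ and for all $i\ge4$, and $g(3)=1$ as well, so the combination is exactly $\sum_i t_i$.

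The resulting lower bound is
\[
\frac29\binom q2+1+\frac{4q}9=\frac{q^2-q+9+4q}{9}=\frac{q^2+3q+9}{9}.
\]
Because $\sum_i t_i(x)$ is an integer, the ceiling follows, which proves \eqref{eq:rounded-lines-en}. The hypotheses of the lemma are needed only to license the use of \eqref{eq:melchior-en} and \eqref{eq:bojanowski-en}.
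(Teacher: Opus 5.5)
Your proposal is correct and is essentially the paper's proof: the paper also adds $2/9$ of the pair identity, $1/3$ of Melchior's inequality and $1/9$ of the Bojanowski inequality. It then observes that every $t_i$ receives coefficient exactly $1$ and takes the ceiling by integrality, just as you do; your extra derivation showing that these weights are forced is a pleasant addition but not needed.
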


\begin{proof}
Add $2/9$ of the pair identity \eqref{eq:pairs-en}, $1/3$ of
\eqref{eq:melchior-en}, and $1/9$ of \eqref{eq:bojanowski-en}.  The coefficient
of $t_i$ is
\[
 \frac{2}{9}\binom i2-\frac{i-3}{3}-\frac{i(i-4)}9=1
 \quad(i\ge5),
\]
and direct substitution gives the same coefficient for $i=2,3,4$.  The
right side is $(q^2+3q+9)/9$.  The left side is integral, which gives the
ceiling.
\end{proof}

Apply Zhang's Theorem~4.1 to the noncollinear set $Q_x$ and to the external point
$x$ (the centre is not an image point)~\cite[Theorem~4.1]{zhang-en}.  It gives
\begin{equation}\label{eq:zhang-en}
u_2(x)\ge\left\lceil\frac{n-1}{6}\right\rceil,
\end{equation}
and the theorem of Csima and Sawyer, for $n>7$, gives
$t_2(x)\ge\lceil6(n-1)/13\rceil$
\cite[the theorem on pp.~187--188]{csima-sawyer-en}.

Order the connecting lines of $Q_x$ containing at least four points as
$L_1,L_2,\ldots$, with nonincreasing sizes.  Every initial segment satisfies
\begin{equation}\label{eq:prefix-en}
\sum_{a=1}^j|L_a|-\binom j2\le n-1,
\end{equation}
because two distinct lines share at most one point.  If a fixed connecting line
contains $a$ points, the $a(n-1-a)$ pairs between this line and its complement must
belong to other connecting lines.  If two fixed lines contain $a$ and $b$ points,
then after deleting their possible intersection there remain $(a-1)(b-1)$ cross
pairs.  Hence
\begin{equation}\label{eq:oneline-en}
\sum_i(i-1)t_i-(a-1)\ge a(n-1-a)
\end{equation}
and
\begin{equation}\label{eq:twoline-en}
\sum_it_i-2\ge(a-1)(b-1)
\end{equation}
These are necessary rather than sufficient conditions for geometric realizability,
but they eliminate many impossible integer multiplicity vectors before coordinates
enter the problem.

We shall also need a constraint involving several inversion centres at once.  Let
$\mathcal C_k$ be a specified family of $k$-point circles, let
$g_k=|\mathcal C_k|$, and let $d_k(x)$ be the number of circles in $\mathcal C_k$
through $x$.  Since two distinct circles meet in at most two points,
\begin{equation}\label{eq:same-en}
\sum_x\binom{d_k(x)}2\le2\binom{g_k}{2}
\end{equation}
and, for $k\ne m$,
\begin{equation}\label{eq:cross-en}
\sum_xd_k(x)d_m(x)\le2g_kg_m
\end{equation}

\begin{lemma}[A second-moment estimate for circle incidences]
\label{lem:moment-en}
Let $\mathcal C_k$ and $\mathcal C_m$ consist of $g_k$ $k$-point circles and
$g_m$ $m$-point circles, respectively.  Suppose that a local count $\Phi_x$ satisfies
\[
\Phi_x\ge \alpha+\beta d_k(x)+\beta'd_m(x)
-\gamma\binom{d_k(x)}2-\gamma'\binom{d_m(x)}2
-\eta d_k(x)d_m(x),
\]
where $\gamma,\gamma',\eta\ge0$.  Then
\begin{align}
\sum_{x\in P}\Phi_x\ge{}&n\alpha+\beta k g_k+\beta'mg_m
-2\gamma\binom{g_k}{2}-2\gamma'\binom{g_m}{2}
-2\eta g_kg_m. \label{eq:moment-en}
\end{align}
\end{lemma}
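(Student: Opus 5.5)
The plan is to sum the assumed pointwise lower bound for $\Phi_x$ over all $x\in P$ and treat each resulting term separately. The constant term contributes $n\alpha$. For the linear terms, double-count incidences between points and circles of $\mathcal C_k$: each circle of $\mathcal C_k$ contains exactly $k$ points of $P$, so $\sum_x d_k(x)=kg_k$. Likewise $\sum_x d_m(x)=mg_m$. These give the terms $\beta kg_k$ and $\beta' mg_m$ exactly. No sign condition on $\beta,\beta'$ is needed, because these are identities rather than inequalities.

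The quadratic terms enter with nonpositive coefficients $-\gamma$, $-\gamma'$ and $-\eta$. It therefore suffices to bound the corresponding sums from above, and these upper bounds are exactly \eqref{eq:same-en} and \eqref{eq:cross-en}. I will re-derive them briefly to be self-contained.

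For \eqref{eq:same-en}, the quantity $\sum_x\binom{d_k(x)}2$ counts triples $(x,C,C')$ with $\{C,C'\}$ an unordered pair of distinct circles of $\mathcal C_k$ both passing through $x$. Two distinct circles meet in at most two points, so each unordered pair contributes at most $2$. This gives the bound $2\binom{g_k}2$, and the same argument gives $2\binom{g_m}2$.

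For \eqref{eq:cross-en}, the sum $\sum_x d_k(x)d_m(x)$ counts triples $(x,C,C')$ with $C\in\mathcal C_k$, $C'\in\mathcal C_m$ and $x\in C\cap C'$. When $k\ne m$ the circles $C$ and $C'$ have different numbers of points of $P$, so they are distinct. Hence each of the $g_kg_m$ pairs contributes at most $2$. If the families were allowed to share circles this step would fail, and this is the only point where the hypothesis $k\ne m$, or disjointness of the families, is used.

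Multiplying these three upper bounds by $-\gamma$, $-\gamma'$ and $-\eta$ reverses them, since $\gamma,\gamma',\eta\ge0$. Adding everything gives \eqref{eq:moment-en}. There is no real obstacle here. The only care needed is to check the sign hypotheses and to note that the families consist of genuine circles, not lines, so that the two-point intersection bound applies.
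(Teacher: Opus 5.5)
Your proposal is correct and matches the paper's proof: double counting gives the linear terms exactly, and the intersection bounds \eqref{eq:same-en}--\eqref{eq:cross-en}, multiplied by the nonpositive coefficients $-\gamma,-\gamma',-\eta$, control the quadratic terms. Your remark that the cross bound requires the two families to share no circle (automatic when $k\ne m$) usefully makes explicit a hypothesis that the lemma's statement leaves implicit.
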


\begin{proof}
Double counting gives $\sum_xd_k(x)=kg_k$ and $\sum_xd_m(x)=mg_m$.
Sum the local inequality over $x$ and apply \eqref{eq:same-en}--\eqref{eq:cross-en}
to the terms with nonpositive coefficients.
\end{proof}

Thus one need not enumerate the full distribution of incidence degrees over all
centres.  It suffices to find a quadratic lower bound of the displayed form for the
finite table of local minima.  The case of one circle family is obtained by setting
$d_m=0$.

\section{A mathematical proof for \texorpdfstring{$n\ge17$}{n>=17}}

Put \(q=n-1\), and recall that \(K\) is the largest number of original points
on a line or a circle.  Every connecting line in an inverted configuration
\(Q_x\) therefore contains at most \(K-1\) image points.  Write
\[
 \Phi_x=\sum_{i=2}^{K-1}\frac{t_i(x)-b_i(x)}{i+1};
\]
by \eqref{eq:circle-en}, \(c(P)=\sum_x\Phi_x\).

\begin{lemma}\label{lem:bojanowski-local-en}
Suppose that
\begin{equation}\label{eq:bojanowski-range-en}
 3(K-1)<2(n-1).
\end{equation}
Then, for every \(x\in P\),
\begin{equation}\label{eq:bojanowski-local-en}
\Phi_x\ge
 \frac{K+1}{18K}\binom q2+\frac{K+1}{2K}
 +\frac{K-2}{9K}q-\frac13\left\lfloor\frac q2\right\rfloor.
\end{equation}
\end{lemma}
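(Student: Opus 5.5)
The plan is to split $\Phi_x$ into its two sums,
\[
\Phi_x=\sum_{i=2}^{K-1}\frac{t_i(x)}{i+1}-\sum_{i=2}^{K-1}\frac{b_i(x)}{i+1},
\]
and to bound each sum separately. The first sum gives the three positive terms of \eqref{eq:bojanowski-local-en}, via a suitable nonnegative combination of \eqref{eq:pairs-en}, \eqref{eq:melchior-en} and \eqref{eq:bojanowski-en}. The second sum gives the subtracted term $\frac13\lfloor q/2\rfloor$.

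The subtracted part is easy. Each centre line has $i\ge2$, so $1/(i+1)\le 1/3$. Applying \eqref{eq:rounded-centre-en} with $s=2$ gives $\sum_i b_i(x)\le\lfloor q/2\rfloor$. Hence
\[
\sum_i\frac{b_i(x)}{i+1}\le\frac13\left\lfloor\frac q2\right\rfloor.
\]

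For the main part, I first check that \eqref{eq:bojanowski-en} applies. Every connecting line of $Q_x$ has at most $K-1$ image points. Hypothesis \eqref{eq:bojanowski-range-en} says exactly that $K-1<2q/3$, which is the condition in Pokora's theorem. Melchior's inequality \eqref{eq:melchior-en} holds unconditionally.

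I then take \eqref{eq:pairs-en} with weight $\alpha$, \eqref{eq:melchior-en} with weight $\beta$, and \eqref{eq:bojanowski-en} with weight $\gamma$. To reproduce the right side $\alpha\binom q2+3\beta+4q\gamma$ of the target, the weights are forced to be
\[
\alpha=\frac{K+1}{18K},\qquad \beta=\frac{K+1}{6K},\qquad \gamma=\frac{K-2}{36K}.
\]
Let $w_i$ be the resulting coefficient of $t_i$:
\[
w_2=\alpha+\beta+4\gamma,\quad w_3=3\alpha+3\gamma,\quad w_4=6\alpha-\beta,\quad
w_i=\alpha\binom i2-\beta(i-3)-\gamma i(i-4)\ \ (i\ge5).
\]
The lemma then follows once I show $w_i\le 1/(i+1)$ for every $2\le i\le K-1$, since the $t_i$ are nonnegative. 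Direct substitution gives $w_2=1/3$ and $w_3=1/4$, with equality in both. It also gives $w_4=(K+1)/(6K)$, and this is at most $1/5$ exactly when $K\ge5$.

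The main obstacle is the range $5\le i\le K-1$. Here $w_i$ is a quadratic in $i$ with leading coefficient $\alpha/2-\gamma=(2K+6)/(36K)>0$. I expect the weights to have been chosen so that the inequality is tight, or nearly so, at the endpoint $i=K-1$. My first attempt is to multiply through by $36K(i+1)$ and prove
\[
36K-36K(i+1)\,w_i\ \ge\ 0\qquad(5\le i\le K-1),
\]
viewing the left side as a cubic polynomial in $i$ with coefficients depending on $K$. I would factor out $(K-1-i)$ if it is a root, and check the sign of the remaining factor on the interval. If it is not a root, I would instead verify the inequality at both endpoints and show the cubic has no sign change in between.

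Small values of $K$, where $w_4\le1/5$ fails, need a separate look. When $K\le4$ the sum defining $\Phi_x$ stops at $i=K-1\le3$, so $t_4$ never enters and the checks $w_2=1/3$, $w_3=1/4$ already suffice. I would confirm this directly for $K=3,4$.
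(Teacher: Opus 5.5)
Your proposal is correct and is essentially the paper's proof: it uses the same weights $\alpha,\beta,\gamma$, the same separate bound $\sum_i b_i(x)/(i+1)\le\frac13\lfloor q/2\rfloor$, and the same coefficient check, where your cubic does factor as $36K-36K(i+1)w_i=3(i-2)(i-3)(K-1-i)\ge0$ for $5\le i\le K-1$, which is exactly the paper's difference $\frac{(i-3)(i-2)(K-i-1)}{12K(i+1)}$. One harmless slip is that the leading coefficient of $w_i$ in $i$ is $\alpha/2-\gamma=1/(12K)$, not $(2K+6)/(36K)$, but your argument never uses that value.
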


\begin{proof}
Condition \eqref{eq:bojanowski-range-en} is precisely the maximum-multiplicity
hypothesis needed for Bojanowski's inequality
\[
4t_2+3t_3\ge4q+\sum_{i\ge5}i(i-4)t_i
\]
\cite[Theorem~2.1]{pokora-en}.  Multiply the point-pair identity, Melchior's
inequality, and this inequality by
\[
 \alpha=\frac{K+1}{18K},\qquad
 \beta=\frac{K+1}{6K},\qquad
 \gamma=\frac{K-2}{36K},
\]
respectively, and add them.  The resulting coefficient of \(t_i\) is at most
\(1/(i+1)\).  For \(i=2,3\) equality holds; the differences between
\(1/(i+1)\) and the resulting coefficient are
\[
 \frac{K-5}{30K}\quad(i=4),\qquad
 \frac{(i-3)(i-2)(K-i-1)}{12K(i+1)}\quad(5\le i\le K-1).
\]
They are nonnegative.  (When \(K=3,4\), the \(i=4\) term is absent.)  Hence
\[
 \sum_i\frac{t_i(x)}{i+1}\ge
 \alpha\binom q2+3\beta+4q\gamma.
\]
The connecting lines counted by the \(b_i(x)\) pass through the inversion
centre and contain pairwise disjoint subsets of the image points.  Consequently
\[
 \sum_i b_i(x)\le\left\lfloor\frac q2\right\rfloor,\qquad
 \sum_i\frac{b_i(x)}{i+1}\le
 \frac13\left\lfloor\frac q2\right\rfloor,
\]
which proves \eqref{eq:bojanowski-local-en}.
\end{proof}

\begin{theorem}\label{thm:nge17-en}
If \(n\ge17\), every admissible \(n\)-point set \(P\) satisfies
\(c(P)\ge F(n)\).
\end{theorem}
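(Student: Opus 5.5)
The plan is to split according to the size of $K$, the largest number of points on a line or circle, with $r=n-K$. For large $K$ (equivalently small $r$) I will use \cref{lem:largest-en}. For small $K$ I will sum the local bound of \cref{lem:bojanowski-local-en} over all $n$ centres, using $c(P)=\sum_x\Phi_x$ from \eqref{eq:circle-en}.

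\textbf{Small $K$.} Assume $3(K-1)<2(n-1)$, so that \cref{lem:bojanowski-local-en} applies at every $x$. Summing \eqref{eq:bojanowski-local-en} gives
\[
c(P)\ge n\left(\frac{K+1}{18K}\binom q2+\frac{K+1}{2K}+\frac{K-2}{9K}q-\frac13\left\lfloor\frac q2\right\rfloor\right).
\]
The leading term is $\frac{K+1}{18K}\cdot\frac{n^3}{2}$, which must be compared with $F(n)\approx n^2/2$. So the bound exceeds $F(n)$ as long as $K$ is at most about $n/9$; more precisely, as long as $\frac{n(K+1)}{18K}\cdot\frac{q(q-1)}{2}$ dominates $\binom q2$. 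Since the coefficient is decreasing in $K$, the right-hand side is monotone in $K$ and the condition reduces to a single explicit threshold $K\le K_0(n)$. I expect $K_0(n)$ to be roughly $n/8$, but this will be checked by clearing denominators and treating the result as a polynomial inequality in $n$ and $K$.

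\textbf{Large $K$.} For $K\ge K_0(n)$, I use \cref{lem:largest-en}. In the circle case,
\[
D_C(n,r)=1+r\Bigl(\tbinom K2-\lfloor K/2\rfloor\Bigr)-\tbinom r2\lfloor K/2\rfloor.
\]
At $r=1$ this equals $F(n)$ exactly. As a function of $r$ it is concave, so on any interval $1\le r\le r_1$ it suffices to check the two endpoints. At $r=r_1=n-K_0$ we have $K\approx n/8$, and then $rK^2/2$ versus $r^2K/4$ still favours the first term, because $K\gtrsim r/2$ fails only when $K<n/3$. This shows the Bonferroni bound alone breaks down for $K$ much below $n/3$.

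The line case is handled the same way: $D_L(n,r)$ at $r=1$ is $\binom{n-1}2\ge F(n)$, and at $r=2$ it is $2\binom{n-2}2-\lfloor(n-2)/2\rfloor$, which is still at least $F(n)$ for large $n$. The same concavity argument then applies.

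\textbf{Main obstacle: the middle range.} The difficulty is the range $n/8\lesssim K\lesssim n/3$, where neither bound obviously suffices. There are two natural fixes.

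\emph{Sharpen the local bound.} First, note that \cref{lem:bojanowski-local-en} discards a lot: its bound holds for every $K$ with $3(K-1)<2q$. In that range the $K$-dependence is mild, giving $\Phi_x\gtrsim\frac{1}{18}q^2/2$ and hence $c(P)\gtrsim n^3/36$, which already exceeds $n^2/2$ once $n\ge 18$ or so. So I will recheck the claim that the summed bound beats $F(n)$ for all $K$ with $3(K-1)<2(n-1)$, not just small $K$. The coefficient $\frac{K+1}{18K}\ge\frac1{18}$ makes the cubic term at least $\frac{n}{18}\binom{n-1}2$. The subtracted $\frac n3\lfloor q/2\rfloor\approx n^2/6$ must be absorbed. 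For $n\ge17$ this amounts to checking
\[
\frac{n}{18}\binom{n-1}{2}+\frac{(K-2)nq}{9K}+\frac n2-\frac n3\left\lfloor\frac q2\right\rfloor\ge F(n)
\]
at the worst $K$ (the largest admissible one). This is a one-variable polynomial check, which I expect to hold from $n=17$ on; this inequality is presumably where the threshold $17$ comes from.

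\emph{Handle $3(K-1)\ge2(n-1)$ by \cref{lem:largest-en}.} In the remaining range $K\ge(2n+1)/3$, so $r\le(n-1)/3$ and $K\ge 2r$. Evaluating $D_C$ and $D_L$ at $r=1$ and at $r=\lfloor(n-1)/3\rfloor$, and using concavity in $r$, gives $c(P)\ge F(n)$. At the upper endpoint $rK^2/2-r^2K/4\ge rK^2/4\ge (n/3)(2n/3)^2/4$, which is larger than $n^2/2$ for $n\ge17$. The careful integer bookkeeping, including the floors and the $r=1$ equality case, is the step I expect to need the most attention.
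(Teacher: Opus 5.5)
Your final case split is the paper's: sum \eqref{eq:bojanowski-local-en} over all $n$ centres when $3(K-1)<2(n-1)$, and use Lemma~\ref{lem:largest-en} when $3(K-1)\ge2(n-1)$. The earlier threshold $K_0\approx n/8$ and the ``middle range'' were a miscalculation: the summed local bound is cubic in $n$ for every $K$. You rightly abandon them.

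The second branch works essentially as you sketch, with one adjustment. Apply concavity to the floor-free bound $1+\frac{rK(3K-n-3)}{4}$, which is concave in $r$ because $r\le(n-1)/3$. Do not apply it to $D_C$ itself, whose floors change with the parity of $K=n-r$. At $r=1$ this floor-free bound can fall short of $F(n)$ by $\tfrac12$; close that either with the exact value $D_C(n,1)=F(n)$ or by integrality of $c(P)$.

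The genuine gap is in the first branch. The inequality you propose to check, with $\frac{K+1}{18K}$ and $\frac{K+1}{2K}$ weakened to $\frac1{18}$ and $\frac12$, is false exactly where it is needed. At $n=17$ its left side is $76.5+\frac{272(K-2)}{9K}\le101.3<113=F(17)$ for every admissible $K\le11$. At $K=3$ it fails for every $17\le n\le21$. After your weakening, the remaining $K$-dependence increases with $K$, so your ``worst $K$'' is actually $K=3$, not the largest one. The discarded terms $\frac{n}{18K}\binom q2+\frac{n}{2K}$ are of size about $n^2/24$ near $K=2n/3$. At $n=17$, $K=11$ they total about $11$, whereas the true margin there is only $10/33$.

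You must keep the exact $K$-dependence. After subtracting $F(n)-1$, the summed bound equals $\bigl(A_nK+n(n-4)(n-7)\bigr)/(36K)$, where $A_n=n^3-23n^2+100n-72$ for even $n$ and $A_n=n^3-23n^2+94n-54$ for odd $n$. Hence it decreases in $K$.

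For $n\ge18$ you may evaluate at the continuous endpoint $K=(2n+1)/3$ and check that the resulting numerator is positive. At $n=17$, however, that endpoint gives the negative margin $-1/63$. The case $n=17$ survives only because $K$ is an integer, so $K\le11$, where the margin is $10/33>0$. Integrality of $c(P)$ then upgrades $c(P)>F(n)-1$ to $c(P)\ge F(n)$. Without these two integrality steps and the unweakened coefficients, the threshold $n\ge17$ is not reached.
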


\begin{proof}
First assume \eqref{eq:bojanowski-range-en}.  Summing
\eqref{eq:bojanowski-local-en} over the \(n\) inversion centres and subtracting
\(F(n)-1\) gives, when \(n\) is even,
\begin{equation}\label{eq:even-margin-en}
 \frac{Kn^3-23Kn^2+100Kn-72K+n^3-11n^2+28n}{36K},
\end{equation}
and, when \(n\) is odd,
\begin{equation}\label{eq:odd-margin-en}
 \frac{Kn^3-23Kn^2+94Kn-54K+n^3-11n^2+28n}{36K}.
\end{equation}
Both expressions decrease with \(K\), since their derivative is
\[
 -\frac{n(n-7)(n-4)}{36K^2}.
\]
Condition \eqref{eq:bojanowski-range-en} gives
\(K\le(2n+1)/3\).  At this continuous right endpoint,
\eqref{eq:even-margin-en} and \eqref{eq:odd-margin-en} become, respectively,
\[
 \frac{n^4-21n^3+72n^2+20n-36}{18(2n+1)},\qquad
 \frac{n^4-21n^3+66n^2+35n-27}{18(2n+1)}.
\]
For even \(n=18+2u\) and odd \(n=19+2u\), the two numerators are
\[
\begin{split}
16u^4+408u^3+3528u^2+11056u+6156,\\
16u^4+440u^3+4140u^2+14472u+10746,
\end{split}
\]
so they are positive for \(u\ge0\).  At \(n=17\), integrality gives
\(K\le11\), and \eqref{eq:odd-margin-en} has minimum \(10/33\).
Thus \(c(P)>F(n)-1\), and integrality yields \(c(P)\ge F(n)\).

It remains to consider \(3(K-1)\ge2(n-1)\).  Since \(n\ge17\), this implies
\(3K\ge n+12\).  Put \(r=n-K\).  Using
\(\lfloor K/2\rfloor\le K/2\), the two estimates in
Lemma~\ref{lem:largest-en} give, respectively,
\[
 D_L(n,r)\ge \frac{rK(3K-n-1)}4
     =B(n,K)+\frac{rK}{2}-1\ge B(n,K)
\]
and
\[
 D_C(n,r)\ge1+\frac{rK(3K-n-3)}4=B(n,K),
\]
where
\begin{equation}\label{eq:coarse-en}
 B(n,K):=1+\frac{(n-K)K(3K-n-3)}4.
\end{equation}
On the real interval \((n+12)/3\le K\le n-2\), the derivative of \(B\) is a
concave quadratic and is positive at the left endpoint, where it equals
\[
 \frac{2n^2+21n-360}{12}>0.
\]
Because a concave quadratic can cross the horizontal axis from positive to negative
at most once on this interval, (B) is either increasing throughout or first
increasing and then decreasing.  Hence (B) has no interior minimum.  Using
\(F(n)\le(n^2-4n+6)/2\), its margins at the two endpoints are
\[
 5n-38,\qquad \frac{(n-7)(n-2)}2,
\]
both positive.  For \(K=n-1\), the exact largest-circle expression in
\eqref{eq:DC-en} equals \(F(n)\), while the largest-line expression exceeds it
by \(\lfloor K/2\rfloor-1\).  This closes the complementary branch.
\end{proof}

The supplementary algebra audit expands every coefficient difference and both
parity polynomials with exact symbolic arithmetic.  It does not enumerate
\(n\), \(K\), or any line-multiplicity vector.

\section{The layer \texorpdfstring{$n=16$}{n=16}}
\label{sec:n16-en}

The local bound \eqref{eq:bojanowski-local-en} remains valid at \(n=16\).
For \(K=3,4,5,6\), its local values and the resulting margins above \(98\)
are
\[
\begin{array}{c|cccc}
K&3&4&5&6\\ \hline
 \Phi_x&20/3&77/12&94/15&37/6\\
 16\Phi_x-98&26/3&14/3&34/15&2/3.
\end{array}
\]
For (K=9,\ldots,15), the smaller of the applicable line and circle bounds in
Lemma~\ref{lem:largest-en} is, respectively,
\[
141,166,201,205,199,162,99.
\]
These numbers are all at least (F(16)=99).  It remains to treat (K=7,8),
which we now do without a finite verification.

Suppose first that \(K=7\), and put
\(d_x=t_6(x)-b_6(x)\); this is the number of seven-point circles through
\(x\).  Suppress the centre from the notation and define
\[
\begin{aligned}
 T&=\sum_{i=2}^6\binom i2t_i,&
 M&=t_2-\sum_{i=4}^6(i-3)t_i,\\
 B&=4t_2+3t_3-\sum_{i=5}^6i(i-4)t_i,&
 S&=\sum_{i=2}^6b_i.
\end{aligned}
\]
The pair identity, Melchior, Bojanowski, and the fact that distinct connecting
lines through the inversion centre contain disjoint sets of image points give,
respectively,
\[
 T=105,\qquad M\ge3,\qquad B\ge60,\qquad S\le7.
\]
A direct comparison of coefficients gives the identity
\begin{equation}\label{eq:n16-k7-identity-en}
\begin{split}
 \Phi_x+\frac{d_x}{42}
={}&\frac7{108}T+\frac7{36}M+\frac1{54}B-\frac13S+R_7,\\
 R_7={}&\frac{t_4}{180}+\frac{b_3}{12}+\frac{2b_4}{15}
          +\frac{b_5}{6}+\frac{b_6}{6}\ge0.
\end{split}
\end{equation}
Consequently
\begin{equation}\label{eq:n16-k7-local-en}
 \Phi_x+\frac{d_x}{42}\ge\frac{37}{6}.
\end{equation}

Let \(g\) be the number of seven-point circles.  Four such circles cannot
pass through one point: after inversion their four six-point lines would have
union of size at least \(4\cdot6-\binom42=18>15\).  Thus \(d_x\le3\), and
\[
 \sum_xd_x=7g,\qquad
 \sum_x\binom{d_x}{2}\le2\binom g2=g(g-1).
\]
If \(g=5\) or \(6\), the pointwise inequality
\(\binom d2\ge2d-3\) for \(0\le d\le3\) gives the contradictory lower bounds
\(14g-48=22,36\), whereas the displayed upper bounds are \(20,30\).
The case \(g\ge7\) contradicts \(7g=\sum_xd_x\le16\cdot3\).
For \(g\le3\), summing \eqref{eq:n16-k7-local-en} gives
\[
 c(P)\ge\frac{296}{3}-\frac g6
       =98+\frac{4-g}{6}>98.
\]

It remains only to exclude equality when \(g=4\).  If no point had
\(d_x=1\), then \(d_x\in\{0,2,3\}\) and
\(\binom{d_x}{2}\ge d_x/2\); hence the second intersection moment would be
at least \(14>12\).  Choose a point with \(d_x=1\).  If \(c(P)=98\), equality
must hold in every term used in \eqref{eq:n16-k7-identity-en}.  The positive
residual and the equality cases of Melchior and Bojanowski then give the unique
multiplicity vectors
\[
 (t_2,t_3,t_4,t_5,t_6)=(14,12,0,4,1),\qquad
 (b_2,b_3,b_4,b_5,b_6)=(7,0,0,0,0).
\]
The four five-point lines and one six-point line represented by these vectors have union of
size at least \(4\cdot5+6-\binom52=16\), again impossible on fifteen image
points.  This proves the case \(K=7\).

Now let \(K=8\).  At any point on a chosen largest eight-point line or circle,
inversion sends the other seven points of that block to a single seven-point
image line.  Put
\[
 O=t_2-b_2,\qquad I=\sum_{i=2}^7(i-1)t_i,\qquad
 S=\sum_{i=2}^7b_i,
\]
and define \(T,M\) as above with upper index \(7\).  The ordinary-line bound,
inequality~\eqref{eq:center-en}, and incidence counting with the seven-point
image line give
\[
 O\ge3,\qquad S\le7,\qquad I\ge6+7(15-7)=62.
\]
Another coefficient identity is
\begin{equation}\label{eq:n16-k8-identity-en}
\begin{split}
 \Phi_x={}&\frac1{80}T+\frac{31}{160}M+\frac1{48}O
        -\frac5{16}S+\frac{17}{160}I+R_8,\\
 R_8={}&\frac{t_5}{240}+\frac{3t_6}{560}+\frac{b_3}{16}
 +\frac{9b_4}{80}+\frac{7b_5}{48}+\frac{19b_6}{112}
 +\frac{3b_7}{16}\ge0.
\end{split}
\end{equation}
It follows that each of the eight centres on the chosen block satisfies
\(\Phi_x\ge1017/160\).  At every other centre the unconditional bound
\eqref{eq:bojanowski-local-en} gives \(\Phi_x\ge145/24\).  Therefore
\[
 8\cdot\frac{1017}{160}+8\cdot\frac{145}{24}-98=\frac{71}{60}>0.
\]
Thus \(c(16)\ge99=F(16)\).  The accompanying verifier only expands
\eqref{eq:n16-k7-identity-en} and \eqref{eq:n16-k8-identity-en} and checks the
displayed equality and moment calculations with exact rational arithmetic;
it contains no optimization model or multiplicity-vector enumeration.

\section{The fifteen-point case}\label{sec:n15-en}

We now prove $c(15)=85$.  Let $K$ be the largest number of points of an
admissible set $P$ on one line or circle.  If $K\ge8$,
Lemma~\ref{lem:largest-en} gives $c(P)\ge85$.  We may therefore assume
$K\le7$.

For $x\in P$, invert the other fourteen points about $x$ and dualize the
resulting point set.  This gives an arrangement of fourteen real projective
lines.  Write $t_r(x)$ for its number of $r$-fold vertices, and put
\[
 \delta_x=t_2(x)-3-\sum_{r=4}^{6}(r-3)t_r(x).
\]
Melchior's relation identifies $\delta_x$ with the total excess of the face
degrees above three, so $\delta_x\ge0$.

The fact that the number of lines is even rules out $\delta_x=1$.  Indeed,
the sign of the product of the fourteen homogeneous line equations is
well-defined on the projective plane and gives a two-colouring of the faces.
If $E=\sum_r r t_r(x)$ is the number of edges and $F_+,F_-$ are the two
numbers of faces, then
\[
 \varepsilon_+=E-3F_+,
 \qquad \varepsilon_-=E-3F_-
\]
are nonnegative, congruent modulo $3$, and satisfy
$\delta_x=\varepsilon_++\varepsilon_-$.  Their sum cannot be one.

Define the following integer associated with the centre $x$:
\begin{equation}\label{eq:n15-residual-en}
 R_x=136t_2(x)+93t_3(x)+60t_4(x)+30t_5(x)-2902.
\end{equation}
We first show that $R_x\ge0$.  The point-pair identity and the definition of
$\delta_x$ give
\[
 3t_3+7t_4+12t_5+18t_6=88-\delta_x.
\]
Since the maximum vertex multiplicity is six, the hypothesis of Bojanowski's
inequality is satisfied, and
\[
 4t_2+3t_3-5t_5-12t_6\ge56.
\]
Eliminating $t_2,t_3$ yields
\begin{equation}\label{eq:n15-rich-bound-en}
 3t_4+9t_5+18t_6\le44+3\delta_x
\end{equation}
and
\begin{equation}\label{eq:n15-residual-delta-en}
 R_x=234+105\delta_x-21t_4-70t_5-150t_6.
\end{equation}
If $\delta_x\ge2$, then \eqref{eq:n15-rich-bound-en} gives
\[
21t_4+70t_5+150t_6
 \le25(t_4+3t_5+6t_6)
 \le\frac{25}{3}(44+3\delta_x),
\]
and hence $R_x\ge(240\delta_x-398)/3$.  Since $R_x$ is an integer,
this gives $R_x\ge28$.

It remains to consider $\delta_x=0$, when the arrangement is simplicial.
Cuntz's complete classification through twenty-seven lines
\cite[Theorem~1.1 and the fourteen-line entries of the invariant table,
p.~15]{cuntz-en} shows that, after excluding the type with a sevenfold vertex
and the near-pencil, the possible vectors $(t_2,t_3,t_4,t_5,t_6)$ are
\[
 (11,12,4,2,0),\qquad(9,16,4,1,0),\qquad(10,14,4,0,1).
\]
Their values of $R_x$ are respectively $10,80,0$.  Consequently
\begin{equation}\label{eq:n15-residual-gap-en}
 R_x\ge0,
 \qquad R_x>0\Longrightarrow R_x=10\ \hbox{or}\ R_x\ge28.
\end{equation}

Let $\ell_k$ and $c_k$ denote the numbers of maximal $k$-point lines and
circles, and put $C=\sum_{k=3}^{7}c_k=c(P)$.  We use the exact identities
\[
 \ell_2+\sum_{k=3}^{7}\binom{k}{2}\ell_k=\binom{15}{2},
 \qquad
 \sum_{k=3}^{7}\binom{k}{3}(\ell_k+c_k)=\binom{15}{3},
\]
and
\[
 \sum_{x\in P}t_{k-1}(x)=k(\ell_k+c_k)\qquad(3\le k\le7).
\]
The Csima--Sawyer theorem gives $\ell_2\ge\lceil90/13\rceil=7$
\cite[the theorem on pp.~187--188]{csima-sawyer-en}.  Direct substitution in
the preceding identities gives
\begin{align}\label{eq:n15-global-identity-en}
420C-35270={}&140(\ell_2-7)+420\ell_4+980\ell_5
 +1680\ell_6+2520\ell_7+\sum_{x\in P}R_x.
\end{align}
Every term on the right is nonnegative, so $C\ge84$.

Suppose that $C=84$.  The left side of
\eqref{eq:n15-global-identity-en} is then $10$.  Thus
$\ell_2=7$, $\ell_4=\cdots=\ell_7=0$, and
$\sum_xR_x=10$.  By \eqref{eq:n15-residual-gap-en}, exactly one centre has
local type $(11,12,4,2,0)$, while the other fourteen have type
$(10,14,4,0,1)$.  It follows that
\[
 \sum_{x\in P}t_2(x)=11+14\cdot10=151.
\]
This is impossible because the local incidence identity for $k=3$ says that
the same sum is $3(\ell_3+c_3)$.  Hence $C\ge85$.  The construction in
\eqref{eq:F-en} has $F(15)=85$, and therefore $c(15)=85$.

\begin{remark}
The proof does not choose the point subsets of any lines or circles and does
not enumerate geometric orbits.  The
finite input consists only of the three multiplicity vectors in Cuntz's
complete fourteen-line table; all remaining steps are integer identities and
inequalities.
\end{remark}

\section{The cases \texorpdfstring{$9\le n\le14$}{9<=n<=14}}
\label{sec:small-uniform-en}

This section separates the two larger cases, which are decided at the level of
global integer data, from the four smaller cases that require an exact finite
verification.  No historical search tree or saved orbit list is used.

For $k\ge3$, let $\ell_k$ and $c_k$ denote the numbers of maximal $k$-point
lines and circles, and put $m_k=\ell_k+c_k$.
Every pair belongs to one maximal
line, while every triple belongs either to its maximal line or to its unique
circumcircle.  Therefore
\begin{align}
 \ell_2+\sum_{k\ge3}\binom{k}{2}\ell_k&=\binom n2,
 \label{eq:small-line-pairs-en}\\
 \sum_{k\ge3}\binom{k}{3}m_k&=\binom n3,
 \qquad \sum_{k\ge3}c_k=c(P).
 \label{eq:small-triple-partition-en}
\end{align}
We call $(\ell_3,\ldots,\ell_K;c_3,\ldots,c_K)$ the \emph{global count
vector}; it records only the number of maximal lines and circles of each size,
not the subsets of points on them.  Lemma~\ref{lem:largest-en} excludes maximum blocks of size at
least $6,7,7,7,7,8$ for $n=9,10,11,12,13,14$, respectively.  Thus $K$ is at
most $5,6,6,6,6,7$ in the branches considered below.

The enumeration of these count vectors uses only necessary conditions.  Since $K<n-1$,
every one-point deletion is admissible, and summing its circle count gives
\begin{equation}\label{eq:small-one-delete-en}
 n c(P)-3c_3\ge n c(n-1).
\end{equation}
Inversion followed by Zhang's prescribed-point ordinary-line theorem
\cite[Theorem~4.1]{zhang-en} gives
\begin{equation}\label{eq:small-z-circle-en}
 3c_3\ge n\left\lceil\frac{n-1}{6}\right\rceil.
\end{equation}
We also use the ordinary-line, Melchior, Hirzebruch--Bojanowski, and
Shnurnikov inequalities for the line part of the count vector, and their sums
over subsets that are forced to be noncollinear
\cite[Eq.~(9)]{hirzebruch-en}
\cite[Theorem~2.1]{pokora-en}\cite[Theorem~3]{shnurnikov-en}.
For a subset contained in a maximal line no circle lower bound is imposed; for
a subset contained in a maximal circle only that defining circle is imposed.
This is the correction needed when deletion averages are used at small order.

We shall also use the following elementary consequence of block intersections.
For $N\ge1$ and $S=aN+b$, $0\le b<N$, set
\[
 \Phi_N(S)=N\binom a2+ab.
\]
This is the minimum of $\sum_i\binom{x_i}{2}$ over nonnegative integral
$x_i$ with $\sum_i x_i=S$.  If a selected family consists of $L$ maximal
lines and $G$ maximal circles and $I_1,I_2$ are its total point and point-pair
incidences, then
\begin{align}
 \Phi_n(I_1)&\le \binom L2+2LG+2\binom G2,
 \label{eq:small-block-point-en}\\
 \Phi_{\binom n2}(I_2)&\le LG+\binom G2.
 \label{eq:small-block-pair-en}
\end{align}
Indeed, two lines meet in at most one point and any other two distinct blocks
meet in at most two points.  Applying these inequalities to the largest
members of each subfamily covers every possible choice.  It does not decide
which labelled points lie on the selected lines and circles.

\subsection{The augmented-line inequality}

Fix $x\in P$, invert about $x$, and dualize the remaining $n-1$ points.  Let
$t_j(x)$ be the number of multiplicity-$j$ vertices of the resulting
arrangement and put
\[
 \delta_x=t_2(x)-3-\sum_{j\ge4}(j-3)t_j(x).
\]
Write $d_k^L(x)$ for the number of maximal $k$-point lines of $P$ through
$x$.

\begin{lemma}\label{lem:augmented-line-en}
For every $x\in P$,
\begin{equation}\label{eq:augmented-line-point-en}
 \sum_{k\ge3}k d_k^L(x)\le n-1+\delta_x.
\end{equation}
Consequently,
\begin{equation}\label{eq:augmented-line-summed-en}
 \sum_{k\ge3}k^2\ell_k\le n(n-1)+D,
 \qquad
 D=3m_3-3n-\sum_{k\ge5}k(k-4)m_k.
\end{equation}
\end{lemma}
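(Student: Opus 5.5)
The plan is to obtain \eqref{eq:augmented-line-point-en} by applying Melchior's inequality \eqref{eq:melchior-en} not to the dual arrangement $\mathcal A_x$ of $Q_x$, but to the arrangement $\mathcal A_x^+$ obtained by adjoining one extra line. That extra line is the dual $x^*$ of the inversion centre. Since the centre is not an image point, $x^*$ is not a line of $\mathcal A_x$, so $\mathcal A_x^+$ has exactly $n$ lines. It is not a pencil, because $Q_x$ is noncollinear and so the lines of $\mathcal A_x$ are not all concurrent. Hence Melchior applies to $\mathcal A_x^+$.

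\textbf{Local inequality.} Put $f(\mathcal A)=t_2-\sum_{j\ge4}(j-3)t_j$, so that $\delta_x=f(\mathcal A_x)-3$.
\begin{itemize}
\item A maximal $k$-point line of $P$ through $x$ ($k\ge3$) becomes a connecting line of $Q_x$ through the centre with $k-1\ge2$ image points. Dually, it is a vertex of $\mathcal A_x$ of multiplicity $k-1$ lying on $x^*$. There are $\sum_{k\ge3}d_k^L(x)$ such vertices.
\item The remaining lines of $\mathcal A_x$ meet $x^*$ at new simple points. These correspond to image points whose line through the centre contains no other image point, and there are $n-1-\sum_{k\ge3}(k-1)d_k^L(x)$ of them.
\end{itemize}
Adjoining $x^*$ raises each vertex of the first kind from multiplicity $j$ to $j+1$. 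One checks case by case that this changes $f$ by exactly $-1$:
\begin{itemize}
\item $j=2$: a double point is lost and a triple point is gained;
\item $j=3$: a quadruple point appears with weight $1$;
\item $j\ge4$: the weight rises from $j-3$ to $j-2$.
\end{itemize}
Each new simple point raises $f$ by $+1$. Therefore
\[
f(\mathcal A_x^+)=\delta_x+3-\sum_{k\ge3}d_k^L(x)+n-1-\sum_{k\ge3}(k-1)d_k^L(x).
\]
Melchior gives $f(\mathcal A_x^+)\ge3$, which is exactly \eqref{eq:augmented-line-point-en}.

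\textbf{Summed inequality.} Sum \eqref{eq:augmented-line-point-en} over $x\in P$. On the left, each maximal $k$-point line is counted at each of its $k$ points, so $\sum_x\sum_k k\,d_k^L(x)=\sum_k k^2\ell_k$. On the right, use the incidence relation $\sum_x t_j(x)=(j+1)m_{j+1}$ from \eqref{eq:incidence-en}, now counting lines and circles together. This gives
\[
\sum_x\delta_x=3m_3-3n-\sum_{j\ge4}(j-3)(j+1)m_{j+1}.
\]
Reindexing with $k=j+1$ turns the last sum into $\sum_{k\ge5}k(k-4)m_k$, which yields $D$ and hence \eqref{eq:augmented-line-summed-en}.

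The main obstacle is the bookkeeping in the local step. One must correctly identify the vertices of $\mathcal A_x$ on $x^*$ with the centre lines carrying at least two image points, check that the multiplicity-raising change in $f$ is uniformly $-1$ for every $j\ge2$, and confirm that $\mathcal A_x^+$ is still a non-pencil so that Melchior is legitimate.
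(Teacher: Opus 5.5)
Your proposal is correct and follows the paper's own proof essentially step for step. You adjoin the dual line $x^*$ of the inversion centre, check that each old vertex of multiplicity $k-1\ge2$ on $x^*$ lowers the Melchior defect by one while each of the $n-1-\sum_k(k-1)d_k^L(x)$ new double points raises it by one, and apply Melchior to the enlarged non-pencil arrangement; the summed form then comes from $\sum_x d_k^L(x)=k\ell_k$ and $\sum_x t_j(x)=(j+1)m_{j+1}$, with your explicit $j=2,3,\ge4$ case check and non-pencil justification merely spelling out what the paper states briefly.
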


\begin{proof}
Let $\mathcal A_x$ be the arrangement of $n-1$ dual lines, and adjoin the
projective line dual to the inversion centre.  A $k$-point original line
through $x$ corresponds to an old vertex of multiplicity $k-1$ on the added
line.  Suppose the selected old vertices have multiplicities $i_1,\ldots,i_s$.
The added line has
$(n-1)-\sum_j i_j$ new ordinary vertices.  Raising the multiplicity of each
selected old vertex by one decreases the Melchior defect by one, including
the cases of multiplicity two and three.  Hence the defect of the enlarged
arrangement is
\[
 \delta_x+(n-1)-\sum_{j=1}^s(i_j+1)
 =\delta_x+(n-1)-\sum_{k\ge3}k d_k^L(x).
\]
The enlarged arrangement is essential, so Melchior's inequality proves
\eqref{eq:augmented-line-point-en}.  Summing over $x$, and using
$\sum_xd_k^L(x)=k\ell_k$ and the local incidence identities, gives
\eqref{eq:augmented-line-summed-en}.
\end{proof}

\subsection{Global exclusion for thirteen and fourteen points}

For $n=14$, the exact integer solutions of the preceding necessary
conditions with $c(P)<F(14)$ give 8981 count vectors.  Inequality
\eqref{eq:augmented-line-summed-en} excludes 8980.  The remaining vector is
\[
 (\ell_3,\ldots,\ell_7;c_3,\ldots,c_7)
 =(20,0,0,0,0;23,44,0,2,3).
\]
Its three seven-point circles have pairwise intersections of size at most two,
so their union has at least $3\cdot7-3\cdot2=15$ points, a contradiction.
Thus $c(14)=F(14)$.

For $n=13$, the same exact enumeration gives 3962 count vectors below $F(13)$.
The augmented-line inequality leaves 137, and
\eqref{eq:small-block-point-en}--\eqref{eq:small-block-pair-en} leave 73.  We
now derive two inequalities that exclude these remaining vectors.

At any point $x$, the local arrangement has twelve lines and, in the present
branch, no vertex of multiplicity greater than five.  Write its multiplicity
vector as $(t_2,t_3,t_4,t_5)$ and its defect as $\delta$.  Then
\begin{equation}\label{eq:n13-local-two-ineq-en}
 -2\delta+t_4+3t_5\le6,
 \qquad -\delta+t_5\le1.
\end{equation}
For the first inequality, the pair identity and Bojanowski's inequality give
\begin{align*}
 t_2+3t_3+6t_4+10t_5&=66,\\
 4t_2+3t_3&\ge48+5t_5,
\end{align*}
and hence
\[
 H:=3t_2-6t_4-15t_5+18\ge0.
\]
Put $E=2t_2-3t_4-7t_5$.  Since
$H+3\delta=3(E+3)$, one has $E\ge-3$.  Moreover, the pair identity gives
$E\equiv0\pmod3$.  Equality $E=-3$ would force $H=\delta=0$.
Cuntz's complete list of simplicial twelve-line arrangements, after excluding
a six-fold vertex, leaves the types $(8,10,3,1)$ and $(9,7,6,0)$; both have
$E=0$ \cite[Theorem~1.1 and the invariant tables]{cuntz-en}.  Therefore
$E\ge0$, which is the first inequality in \eqref{eq:n13-local-two-ineq-en}.

For the second inequality, four five-fold vertices would dualize to four
five-point lines whose union has at least $20-\binom42=14$ points; hence
$t_5\le3$.  If $\delta\ge2$ the claim follows.  If $\delta=0$, Cuntz's two
types above have $t_5\le1$.  The only remaining possible violation is
$(\delta,t_5)=(1,3)$.  The pair and defect identities then force
$(t_2,t_3,t_4,t_5)=(12,4,2,3)$.  But three five-point lines and one
four-point line have a union of at least $5+5+5+4-\binom42=13$ points, again
impossible on twelve points.  This proves the second inequality.

Summing \eqref{eq:n13-local-two-ineq-en} over the thirteen centres yields
\begin{equation}\label{eq:n13-global-two-ineq-en}
 -2D+5m_5+18m_6\le78,
 \qquad -D+6m_6\le13.
\end{equation}
Of the 73 count vectors, 71 violate the first inequality and the other two
violate the second.  This is an exact substitution into
\eqref{eq:n13-global-two-ineq-en}; no subsets of labelled points are selected.  Hence
$c(13)=F(13)$.

\subsection{Exact verification for nine through twelve points}

For $9\le n\le12$, the same global constraints and
\eqref{eq:augmented-line-summed-en} leave a small but nonempty boundary.  We
describe the common verification so that every transition can be reproduced.
For a marked point $x$, let $a_k(x)$ and $u_k(x)$ count maximal $k$-point
lines and circles through $x$.  Then
\[
 t_{k-1}(x)=a_k(x)+u_k(x),
 \qquad
 \sum_{j\ge2}\binom j2t_j(x)=\binom{n-1}{2}.
\]
These local multiplicity vectors obey the published arrangement inequalities used above, the
block-union inequalities, and \eqref{eq:augmented-line-point-en}.  A first
integer model records only how many points have each vector and imposes
\[
 \sum_xa_k(x)=k\ell_k,\qquad
 \sum_xu_k(x)=kc_k,\qquad
 \sum_x\delta_x=D.
\]

The next two stages still do not assign labelled point subsets.  For a block category
$\alpha$, let $d_\alpha(x)$ be its point degree and let
$\lambda_\alpha(e)$ be its degree at a point pair $e$.  If $k_\alpha$ is the
block size and $N_\alpha$ its multiplicity, then
\begin{align}
 \sum_xd_\alpha(x)&=k_\alpha N_\alpha,&
 \sum_e\lambda_\alpha(e)&=\binom{k_\alpha}{2}N_\alpha,
 \label{eq:small-category-totals-en}\\
 \sum_\alpha(k_\alpha-2)\lambda_\alpha(e)&=n-2.
 \label{eq:small-pair-profile2-en}
\end{align}
The last identity says that the portions outside $e$ of all maximal blocks
through $e$ partition $P\setminus e$.  The point and point-pair data are linked by
\[
 \sum_{y\ne x}\lambda_\alpha(\{x,y\})=(k_\alpha-1)d_\alpha(x).
\]
The subsequent intersection count uses,
for two block categories, the exact identities
\[
 N_2=M_2,\qquad N_1=M_1-2M_2,\qquad N_0=Q-M_1+M_2,
\]
where $M_1$ is the sum of intersection sizes, $M_2$ is the sum of their second
binomial coefficients, and $N_s$ counts pairs of
distinct blocks meeting in $s$ points.  It also counts triples disjoint from
a distinguished block.  All coefficients are binomial counts, and no
floating-point relaxation is used.

Finally, inversion at $x$ determines the complete count vector on $n-1$ image
points: blocks through $x$ become image lines with one fewer point, whereas
blocks avoiding $x$ become image circles.  A local vector is retained only if this
child vector survives the next level, after which the local multiplicity data
at the $n$ points are balanced again.  The recursion stops at $n=9$.  There the model
selects maximal line and circle subsets of the nine labelled points, requires
every triple to lie in exactly one selected block, requires a point pair to
lie in at most one selected line, and imposes the already proved lower bounds
on every admissible proper subset.  Infeasibility of this more permissive
abstract model implies geometric infeasibility.

The exact counts in the final replay are:
\begin{center}
\small
\begin{tabular}{c|rrrrrrr}
\toprule
$n$ & initial & augmented sum & local vectors & point--pair & intersections & block pairs & terminal\\
\midrule
9  &45   &36  &23  &-- &-- &-- &0\\
10 &180  &150 &55  &26 &21 &12 &1\\
11 &416  &160 &70  &39 &28 &-- &0\\
12 &1354 &211 &169 &55 &44 &-- &0\\
\bottomrule
\end{tabular}
\end{center}
Here ``initial'' is the number left from 129, 551, 1215, and 4322 raw
count vectors, respectively.  The terminal zeroes for $n=11,12$ come from the
same recursion.  It visits, respectively,
\[
 (28,72,222)\quad\text{and}\quad(44,322,661,622)
\]
count vectors at orders $(11,10,9)$ and $(12,11,10,9)$.  Every excluded status
is an exact \texttt{INFEASIBLE} status; there is no \texttt{UNKNOWN}.

For $n=10$ the recursion leaves the single count vector
\[
 (\ell_3,\ell_4,\ell_5,\ell_6;
   c_3,c_4,c_5,c_6;\ell_2)
 =(10,0,0,0;10,20,2,0;15).
\]
The four local multiplicity vectors retained by the recursive certificate all have
$u_5(x)=1$ and $a_3(x)\le3$.  Since the two five-point circles have ten point
incidences, they therefore partition the ten points into sets $A$ and $B$.
Moreover, the ten three-point lines have thirty point incidences, so
$a_3(x)=3$ at every point; the corresponding retained vector also has
$u_4(x)=8$.  Every four-point circle meets each of $A$ and $B$ in two points.

Associate such a circle with its chord in $A$ and its chord in $B$.  The two
chords meet on the fixed radical axis of the two five-point circles.  No three
chords determined by five points on a circle can pass through the same point
off that circle: three pairwise disjoint chords would require six endpoints.
Thus each set of chords meeting at one radical-axis point has size at most two
on either side.  The twenty four-point circles attain the resulting maximum,
so the chord graph is five disjoint copies of $K_{2,2}$.  Up to the action of
$S_5\times S_5$, the six perfect matchings of the Petersen graph give two
double orbits.  One admits no compatible ten-line design; the other has two
complementary designs forming one orbit.

For a representative of that orbit, the ten line incidences give a complete
projective parametrization with nonzero parameters $a,b,d$.  Five selected
circle-intersection minors are
\begin{gather*}
 ad+b^2,\quad -ab+ad-b,\quad ab-ad-a-1,\\
 -ad+bd-b-d,\quad b(ab+1).
\end{gather*}
They must all vanish.  Adding the second and third relations gives
$a+b+1=0$, while the last relation and $b\ne0$ give $ab+1=0$.  Hence
$b^2+b-1=0$.  The first relation gives
$d=b^2/(b+1)$; here $b+1=-a\ne0$.  Substitution in the fourth relation then
gives, after removal of nonzero factors,
\[
 (b-1)(2b+1)=0,
\]
which is relatively prime to $b^2+b-1$.  The discarded parameter branch makes
two labelled points coincide, and every factor divided out in forming the
five primitive minors is one of $a,d,b$ or $ad$, all nonzero.  Thus the last
$n=10$ count vector is impossible.

Combining these verifications with the construction in \eqref{eq:F-en} gives
\[
 c(n)=F(n)\qquad(9\le n\le14).
\]

\begin{remark}[Scope of the finite verification]
The only labelled search in this section occurs in the nine-point terminal
model.  Orders ten through twelve use count vectors and recursive consistency
of local multiplicities, while orders thirteen and fourteen use only global inequalities.
No condition $\sum_jt_j\ge2r-4$ is assumed for an arbitrary arrangement.
For an arrangement of $q$ lines, Elliott's Theorem~3 is used only when its
hypotheses $q\ge10$ and no $(q-1)$-fold vertex are explicitly satisfied.
\end{remark}

\section{The finite cases \texorpdfstring{$4\le n\le8$}{4<=n<=8}}

We shall use the following two elementary facts repeatedly.
\begin{lemma}\label{lem:two-line-geometry-en}
Let $A,B,C,D$ be four points in the real projective plane, no three collinear.
Then the three points $AB\cap CD$, $AC\cap BD$, and $AD\cap BC$ are not
collinear.  Moreover, suppose a circle meets each of two lines in two labelled
points.  If the lines intersect and directed unit coordinates are measured from
their intersection, the products of the two coordinate pairs are equal.  If the
lines are parallel and a common directed coordinate is used, the sums of the two
coordinate pairs are equal.  Finally, if three Euclidean circles have three
distinct pairwise radical axes, those axes are concurrent in the real projective
completion of the plane.
\end{lemma}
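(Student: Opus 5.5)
The lemma has three independent parts, and I will prove each by a short coordinate or power-of-a-point argument.

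\emph{Diagonal points.} Since no three of $A,B,C,D$ are collinear, they form a projective frame. I will therefore apply a projective transformation sending them to $A=[1:0:0]$, $B=[0:1:0]$, $C=[0:0:1]$, $D=[1:1:1]$. Collinearity is preserved, so it suffices to compute the three diagonal points there.
\begin{itemize}
\item Line $AB$ is $z=0$ and line $CD$ is $x=y$, so $AB\cap CD=[1:1:0]$.
\item Similarly, $AC\cap BD=[1:0:1]$.
\item Similarly, $AD\cap BC=[0:1:1]$.
\end{itemize}
The determinant of the matrix with these rows is $-2\neq0$. Over $\mathbb R$, characteristic $\ne2$, so the three points are not collinear. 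The only point to watch is that the frame normalization is legitimate, which follows from the no-three-collinear hypothesis.

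\emph{Circle meeting two lines.} For intersecting lines, let $O$ be their intersection and give each line a directed unit coordinate with origin $O$. Parametrize a point as $O+s\mathbf v$ with $|\mathbf v|=1$ and substitute into the circle equation $|X|^2+\mathbf a\cdot X+c=0$. This gives a monic quadratic in $s$ whose constant term is $\operatorname{pow}(O)$, the power of $O$ with respect to the circle. The two intersection coordinates therefore have product $\operatorname{pow}(O)$ on either line, so the two products are equal.

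For parallel lines, take them as $y=y_1$ and $y=y_2$ with common coordinate $x$. Substituting gives $x^2+a_1x+(\dots)=0$ on each line, with the same linear coefficient $a_1$. The sum of the two roots is $-a_1$ on both lines, so the two sums are equal.

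\emph{Radical axes.} Write the circles as $f_j=|X|^2+\mathbf a_j\cdot X+c_j$. The radical axis of circles $i$ and $j$ is the line $f_i-f_j=0$. The three differences satisfy $(f_1-f_2)+(f_2-f_3)+(f_3-f_1)=0$, so the three linear forms are linearly dependent. Each form is nonconstant, because the axes are assumed to be actual lines; equivalently, no two of the circles are concentric.

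Homogenize the forms. Three distinct projective lines whose defining linear forms are dependent belong to one pencil, so they pass through a common projective point. That point is finite if the three centres are noncollinear and at infinity, giving parallel axes, if the centres are collinear.

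The argument has no real obstacle; the only care needed is the degenerate bookkeeping.
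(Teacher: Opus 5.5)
Your proof is correct and follows the paper's argument essentially step for step: the same projective frame with determinant $-2$ for the diagonal points, the same Vieta comparison of monic restricted quadratics (your power-of-a-point constant term is the paper's common constant $G$ in oblique axes), and the same observation that the three pairwise differences of normalized circle equations sum to zero. Your extra remarks on when the common point is finite or at infinity are correct but not needed for the lemma.
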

\begin{proof}
A projective change of coordinates sends $A,B,C,D$ to
$(1,0,0),(0,1,0),(0,0,1),(1,1,1)$.  The three displayed intersections then have
coordinates $(1,1,0),(1,0,1),(0,1,1)$; their determinant is $-2$.
For two intersecting lines, use them as oblique coordinate axes.  A circle has
equation
\[
x^2+y^2+2kxy+Dx+Ey+G=0,
\]
so its restrictions to the two axes are monic quadratics with the same constant
term $G$.  For parallel lines $y=0$ and $y=h$, the two restrictions are monic
quadratics in $x$ with the same linear coefficient $D$.  Vieta's formulas give
the two assertions.  For the last assertion, normalize the equations of the
three circles to have quadratic part $x^2+y^2$.  The three pairwise differences
are linear equations whose sum is zero.  Hence the intersection of any two of
the corresponding projective lines lies on the third.
\end{proof}

For $n=4$, either three points are collinear, in which case the remaining point
with each of the three pairs on that line gives three distinct circles, or no three
points are collinear, in which case the four triples give either one four-point
circle or four three-point circles.  Admissibility excludes the former alternative,
so $c(4)\ge3$.  The construction in \eqref{eq:F-en} attains three.  For $n=5$,
four collinear points give six circles through the fifth point, while four concyclic
points give at least $1+\binom42-\lfloor4/2\rfloor=5$ circles by
Lemma~\ref{lem:largest-en}.  In the remaining case every line and circle has at most
three points.  There are at most two collinear triples: three distinct three-point
lines on five points would force two of them to share two points.  Hence at least
eight noncollinear triples remain; because no four points are concyclic, they
determine distinct circles.  Thus $c(5)\ge5$, and \eqref{eq:F-en} attains five.

For six points, the direct estimate of Lemma~\ref{lem:largest-en} gives at least
nine circles whenever four points are collinear or five points are collinear or
concyclic.  If no four points are concyclic and every line contains at most three
points, at most four collinear triples can occur: each point is on at most two
three-point lines, so the total number of point--line incidences is at most twelve.
Thus at least $20-4=16$ noncollinear triples determine distinct circles.  It remains
to consider four points $a_0,a_1,a_2,a_3$ on a circle $\Gamma$, with the other
points denoted $q,r$, under the assumptions that no four are collinear and no five
are concyclic.  Let $\tau_q,\tau_r$ count the secants through $q,r$, let $s$ record
whether $qr$ contains an $a_i$, and let $b$ count circles through $q,r$ and two
$a_i$'s.  Disjointness of the relevant pairs gives
$\tau_q,\tau_r,b\le2$ and $s\le1$, while triple counting gives
\[
c(P)=17-\tau_q-\tau_r-s-3b.
\]
If $c(P)\le7$, then $b=2$.  Relabel the two disjoint pairs as $01$ and $23$,
and let $C_{01}$ and $C_{23}$ be the circles through
$q,r,a_0,a_1$ and $q,r,a_2,a_3$, respectively.  The three pairwise radical
axes of $\Gamma,C_{01},C_{23}$ are $a_0a_1,a_2a_3,qr$; hence these three
lines are concurrent by Lemma~\ref{lem:two-line-geometry-en}, in the projective
completion.  If $s=1$, this concurrency makes $qr$ equal to one of the first two
lines, producing four collinear points.  Thus $s=0$, and the displayed count with
$c(P)\le7$ forces $\tau_q=\tau_r=2$.  The points $q,r$ must be two diagonal points of the complete
quadrilateral on $a_0,a_1,a_2,a_3$, whereas the same radical-axis relation would
make all three diagonal points collinear, contrary to
Lemma~\ref{lem:two-line-geometry-en}.  Hence $c(6)\ge8$,
and the construction below attains eight.

For seven points, the largest-block estimate first removes every case with at least
five points on a line or circle from a putative configuration with at most ten
circles.  When all blocks have size at most four, write $\ell_3,\ell_4$ for the
numbers of maximal three- and four-point lines, put
$\ell=\ell_3+4\ell_4$, and let $c_4$ be the number of four-point circles.  Unique
ownership of triples gives
\[
c(P)=35-\ell-3c_4.
\]
There are at most two four-point lines, and the pair identity gives
$3\ell_3+6\ell_4\le21$; hence $\ell\le11$.  Thus $c(P)\le10$ requires at least
five four-point circles.  The exact isomorphism classification of such circle
families gives two classes with five circles and one class each with six and seven
circles.  Neither five-circle class has enough compatible collinear triples.  The
six-circle class has a unique extension with $c(P)\le10$; its seven three-point
lines cover all $21$ point pairs, contrary to the Sylvester--Gallai theorem
\cite[pp.~111--112]{borwein-moser-en}.

It remains to exclude the seven-circle family.  Every compatible extension with
$c(P)\le10$ contains, after relabeling, the lines $016$ and $025$.  Normalize
$p_0=(0,0)$, $p_1=(1,0)$, $p_2=(0,1)$, $p_6=(a,0)$ and $p_5=(0,b)$.  Since this
is an affine normalization, write the common quadratic part of every circle as
\[
 Q(x,y)=x^2+2kxy+hy^2,\qquad h-k^2>0.
\]
The seven prescribed circle equations first give $a=bh$ and, with
$t=(h-a)/(a-1)$,
\[
 x_3=ty_3,\qquad x_4=-ty_4,
\]
and the first two equations then give
\[
 y_3=\frac{t+h}{Q(t,1)},\qquad
 y_4=\frac{a-t}{Q(-t,1)}.
\]
Positive definiteness makes both denominators positive.  The last two circle
equations reduce to
\[
ah-ak-hk+h=0,\qquad ak-a-h+k=0.
\]
Distinctness gives $a\ne0,1$, $b\ne1$, and $y_3\ne0$.  Hence the numerator
$t+h$ is nonzero; since $t+h=a(h-1)/(a-1)$, one has $h\ne1$.
Adding the two displayed equations now
gives $a=k$, and the second gives $h=k^2$, contradicting positive definiteness.
The exact reductions are checked in Appendix~\ref{app:verification-en}.  Hence
$c(7)\ge11$; the construction below attains eleven.

For eight points, Lemma~\ref{lem:largest-en} removes every line or circle of size
at least five from any putative example with at most sixteen circles.  Thus all
remaining blocks have size three or four.  Let $\ell$ be the number of collinear
triples and let $c_4$ count four-point circles.  Then
\[
c(P)=56-\ell-3c_4.
\]
In fact the range below twelve can be excluded without a finite search.  Write
$m=\ell_4+c_4$ for the number of four-point blocks and let $\ell_3$ count
three-point lines.  The four triples owned by distinct four-point blocks are
disjoint, so $m\le14$, and at most $56-4m$ triples remain available for
three-point lines.  Hence
\[
 \ell_3\le\min\{7,56-4m\},
\]
where the first bound is the eight-point orchard bound
\cite[Table~I]{burr-grunbaum-sloane-en}.  Moreover, three four-point lines
would have union of size at least $12-3=9$, since two distinct lines meet in
at most one point; thus $\ell_4\le2$.  Finally,
$\ell_2+3\ell_3+6\ell_4=28$ and Melchior's inequality
$\ell_2\ge3+\ell_4$ show that $\ell_4=2$ implies $\ell_3\le3$.
Since $\ell=\ell_3+4\ell_4$ and $c_4=m-\ell_4$, the preceding circle-count
identity becomes
\[
 c(P)=56-3m-\ell_3-\ell_4.
\]

If $c(P)\le11$, then
\[
 3m+\ell_3+\ell_4\ge45.
\]
For $m\le11$ the left side is at most $33+7+2=42$.  For $m=12$ it is
at most $44$ when $\ell_4\le1$, and at most $41$ when $\ell_4=2$.
For $m=13$, the bound $\ell_3\le4$, together with the preceding improvement
when $\ell_4=2$, gives at most $44$.  For $m=14$ one has $\ell_3=0$, again
giving at most $44$.  This proves $c(P)\ge12$ mathematically.

One uniform finite model now classifies only the five boundary counts from
twelve through sixteen.  It has two exhaustive branches: a four-point line is
fixed up to relabeling, or no four-point line exists.  Counts twelve and thirteen
each have one incidence class; counts fourteen, fifteen, and sixteen have,
respectively, one, three, and three classes.  For the
twelve-circle class, its two
four-point lines and the listed circles give the product or sum relations associated
with the three pairings of four distinct coordinates; any two already force a
repeated coordinate by Lemma~\ref{lem:two-line-geometry-en}.  In the
thirteen-circle class, paired radical axes put all three diagonal points of a
complete quadrilateral on the four-point line, contradicting the first assertion of
that lemma.  For the fourteen-circle class, choose an inversion centre on one
selected circle but away from the eight points and its finitely many intersections
with the other circles.  Exactly that circle becomes a four-point line, and the
resulting incidence class is the excluded thirteen-circle class.  At fifteen
circles, one class is excluded by the exact inversion equations
detailed in Appendix~\ref{app:verification-en}; the other two reduce to the same
product/sum relations.  At sixteen circles, the two classes containing a four-point
line are excluded by the complete-quadrilateral argument.  For the remaining class,
let $E_{ijkl}$ be the numerator of the Euclidean concyclicity determinant for
$p_i,p_j,p_k,p_l$ after the stated coordinate substitution.  The exact formulas
factor $E_{ijkl}$ into factors already known to be nonzero and one residual
polynomial, denoted by $F_{ijkl}$.  Thus the required circle is equivalent to
$F_{ijkl}=0$ in this branch.  These residual polynomials satisfy
\[
F_{0256}-F_{2357}=2a(u-b),\qquad
F_{0256}+F_{0367}\bigm|_{u=b}=2s,
\]
where the coordinate normalization has $a\ne0$ and $s\ne0$; hence the three
required circle equations are inconsistent.  These branches exhaust all incidence
classes through sixteen circles, so $c(8)\ge17$.  The rational construction below
attains seventeen.

\begin{remark}[What the finite search proves]
The integer models classify abstract maximal line and circle blocks subject to
unique triple ownership and the intersection bounds for two lines, a line and a
circle, or two circles.  Feasibility of such a model is not treated as geometric
realizability.  Every surviving class is therefore followed by a separate exact
geometric exclusion.  Conversely, an infeasibility conclusion is used only when the
solver reports an exhaustive status; a time limit or an unknown status is rejected.
\end{remark}

\subsection{\texorpdfstring{$c(6)=8<9=F(6)$}{c(6)=8<9=F(6)}}

Let $s=\sqrt3$ and take
\[
\begin{aligned}
p_0&=(7,0),&p_1&=(1,0),&p_2&=(4,2s),\\
p_3&=(1,s/2),&p_4&=(1,4s),&p_5&=(1/7,4s/7).
\end{aligned}
\]
\begin{figure}[ht]
\centering
\begin{tikzpicture}[x=0.72cm,y=0.72cm,every node/.style={font=\small}]
  \coordinate (p0) at (7,0);
  \coordinate (p1) at (1,0);
  \coordinate (p2) at (4,3.4641);
  \coordinate (p3) at (1,0.8660);
  \coordinate (p4) at (1,6.9282);
  \coordinate (p5) at (0.1429,0.9897);
  \draw[gray] (p0)--(p4);
  \draw[gray] (p0)--(p5);
  \draw[gray] (p1)--(p4);
  \fill (p0) circle (2.1pt) node[below right] {$p_0$};
  \fill (p1) circle (2.1pt) node[below] {$p_1$};
  \fill (p2) circle (2.1pt) node[right] {$p_2$};
  \fill (p3) circle (2.1pt) node[right] {$p_3$};
  \fill (p4) circle (2.1pt) node[above] {$p_4$};
  \fill (p5) circle (2.1pt) node[left] {$p_5$};
\end{tikzpicture}
\caption{The six-point exceptional construction.  Gray segments show its
collinear triples; circles are omitted to keep the point diagram legible.}
\label{fig:c6-en}
\end{figure}
The circles containing at least three of the points are exactly
\begin{equation}\label{eq:c6-en}
0123,\ 0145,\ 025,\ 034,\ 124,\ 125,\ 135,\ 2345.
\end{equation}
There are $\binom63-3=17$ noncollinear triples, and the three four-point circles and
five three-point circles in \eqref{eq:c6-en} contain
$3\binom43+5=17$ triples.  Thus the set determines exactly eight circles.
The program \file{computation/n4_to_8/n6/verify_construction.py} verifies all collinearity determinants,
circle equations, and triple covers exactly in $\mathbb Q(\sqrt3)$; the preceding
case analysis gives the matching lower bound $c(6)\ge8$.

\subsection{\texorpdfstring{$c(7)=11<13=F(7)$}{c(7)=11<13=F(7)}}

Let $s=\sqrt3$ and take
\[
\begin{aligned}
p_0&=(0,0),&p_1&=(1,0),&p_2&=(1/2,-s/2),\\
p_3&=(1/2,s/2),&p_4&=(1/2,-s/6),&
p_5&=(3/2,-s/2),&p_6&=(-1/2,-s/2).
\end{aligned}
\]
\begin{figure}[ht]
\centering
\begin{tikzpicture}[x=2.2cm,y=2.2cm,every node/.style={font=\small}]
  \coordinate (p0) at (0,0);
  \coordinate (p1) at (1,0);
  \coordinate (p2) at (0.5,-0.8660);
  \coordinate (p3) at (0.5,0.8660);
  \coordinate (p4) at (0.5,-0.2887);
  \coordinate (p5) at (1.5,-0.8660);
  \coordinate (p6) at (-0.5,-0.8660);
  \draw[gray] (p0)--(p3)--(p6);
  \draw[gray] (p0)--(p4)--(p5);
  \draw[gray] (p1)--(p3)--(p5);
  \draw[gray] (p1)--(p4)--(p6);
  \draw[gray] (p2)--(p3);
  \draw[gray] (p2)--(p5)--(p6);
  \foreach \i in {0,...,6}{\fill (p\i) circle (2.1pt) node[above right] {$p_\i$};}
\end{tikzpicture}
\caption{The seven-point construction with eleven circles.  Gray segments show
all collinear triples; circles are omitted.}
\label{fig:c7-en}
\end{figure}
The circles are exactly
\[
0134,\ 0156,\ 0235,\ 0246,\ 1236,\ 1245,
012,\ 345,\ 346,\ 356,\ 456.
\]
The first six contain four points, and the last five are ordinary circles.
Thus the configuration determines exactly eleven circles.  The program
\file{computation/n4_to_8/n7/verify_extremal_configuration.py}
checks these incidences twice: first over the positive-definite form
$x^2+xy+y^2$, and then after the explicit Euclidean change of coordinates
$(x,y)\mapsto(x+y/2,\sqrt3\,y/2)$.

It is natural to ask for an intermediate construction with
$c(P)=12=F(7)-1$.  No such configuration exists.  If five or more points lie
on one line or circle, Lemma~\ref{lem:largest-en} gives at least thirteen
circles.  Otherwise, if $c_4$ is the number of four-point circles and
$\ell=\ell_3+4\ell_4$, then $c(P)=35-\ell-3c_4$.  The pair bound gives
$\ell\le11$.  Equality with twelve circles leaves the cases
$(c_4,\ell)=(4,11),(5,8),(6,5),(7,2)$.  The first would leave no ordinary line,
contrary to the Sylvester--Gallai theorem
\cite[pp.~111--112]{borwein-moser-en}.  The two five-circle packings admit no compatible
line family.  The five six-circle incidence orbits are excluded by the
two-line relations of Lemma~\ref{lem:two-line-geometry-en} and four exact
coordinate reductions; in one reduction the nominally missing sixth line is
forced, producing the eleven-circle configuration above.  The unique
seven-circle orbit is excluded by the positive-definite calculation already
used in the lower bound.  These finite assertions are reproduced by
\file{computation/n4_to_8/n7/classify_line_extensions.py},
\file{verify_circle_count_spectrum.py}, and \file{verify_q7_exclusion.py}.

\subsection{\texorpdfstring{$c(8)=17<19=F(8)$}{c(8)=17<19=F(8)}}

We first make the historical Segre example explicit.  Take the vertices of two
concentric, similarly oriented squares,
\[
(\pm1,\pm1),\qquad(\pm r,\pm r),\qquad r>0,\quad r\ne1.
\]
This is the incidence structure obtained by a central projection of a cube along an
axis through the centres of two opposite faces, as recorded by Elliott
\cite[p.~182, discussion following Theorem~2]{elliott-en}.  It has two four-point
lines, ten four-point circles, and eight three-point circles.  The number of
noncollinear triples is
\[
\binom83-2\binom43=48=10\binom43+8,
\]
so it determines 18 circles.

The following rational construction improves this to 17.  Take
\[
\begin{aligned}
p_0&=(0,0),\\
p_1&=\left(\frac{263}{626},\frac{2178}{4069}\right),&
p_2&=\left(\frac{263}{313},\frac{4356}{4069}\right),\\
p_3&=\left(\frac{789}{626},\frac{6534}{4069}\right),&
p_4&=\left(\frac{53519}{195938},\frac{1842342}{1273597}\right),\\
p_5&=\left(\frac{184032}{458545},\frac{7245468}{5961085}\right),&
p_6&=\left(\frac{160557}{917090},\frac{5527026}{5961085}\right),\\
p_7&=\left(-\frac{25}{313},\frac{312}{313}\right).
\end{aligned}
\]
\begin{figure}[ht]
\centering
\begin{tikzpicture}[x=3.0cm,y=2.0cm,every node/.style={font=\small}]
  \coordinate (p0) at (0,0);
  \coordinate (p1) at (0.4201,0.5353);
  \coordinate (p2) at (0.8403,1.0705);
  \coordinate (p3) at (1.2604,1.6058);
  \coordinate (p4) at (0.2731,1.4465);
  \coordinate (p5) at (0.4013,1.2155);
  \coordinate (p6) at (0.1751,0.9272);
  \coordinate (p7) at (-0.0799,0.9968);
  \draw[gray] (p0)--(p3);
  \draw[gray] (p0)--(p4);
  \draw[gray] (p3)--(p7);
  \fill (p0) circle (2.1pt) node[below] {$p_0$};
  \fill (p1) circle (2.1pt) node[below right] {$p_1$};
  \fill (p2) circle (2.1pt) node[below right] {$p_2$};
  \fill (p3) circle (2.1pt) node[right] {$p_3$};
  \fill (p4) circle (2.1pt) node[above] {$p_4$};
  \fill (p5) circle (2.1pt) node[above right] {$p_5$};
  \fill (p6) circle (2.1pt) node[left] {$p_6$};
  \fill (p7) circle (2.1pt) node[left] {$p_7$};
\end{tikzpicture}
\caption{The rational eight-point construction with 17 circles.  Gray segments
show all maximal collinear subsets; the circles are listed in the text.}
\label{fig:c8-en}
\end{figure}
Its 17 circles are
\begin{gather*}
0145,0167,024,0257,026,0347,0356,1247,1256,\\
1346,135,137,157,2345,2367,246,4567.
\end{gather*}
The program \file{computation/n4_to_8/n8/verify_construction.py} uses rational determinants to verify
these circles and checks that all $\binom83=56$ triples belong to exactly one maximal
collinear subset or one listed circle.  The isomorphism classification of all
eight-point incidence structures with at most 16 circles finds none that is
realizable in the Euclidean plane.  Therefore $c(8)\ge17$, and hence $c(8)=17$.

\section{The variant with no three collinear points}
\label{sec:no-three-en}

We now prove Corollary~\ref{thm:no-three-en}.

\begin{proof}
For the general upper bound, take $n-1$ points on a circle and an exterior
point that avoids the finitely many secants through pairs of those points.
There are no three collinear points, and the construction determines the
large circle and one distinct circle for every pair on it.  Thus
$c_{\mathrm{nc}}(n)\le1+\binom{n-1}{2}$.

We first prove the matching lower bound for $n\ge11$.  Here $K$ is the maximum
number of concyclic points, because every line contains at most two points.
If $3(K-1)<2(n-1)$, the proof of
Lemma~\ref{lem:bojanowski-local-en} applies with $b_i(x)=0$ and gives
\[
 \Phi_x\ge
 \frac{K+1}{18K}\binom{n-1}{2}
 +\frac{K+1}{2K}+\frac{(K-2)(n-1)}{9K}.
\]
After summing over $x$ and subtracting $1+\binom{n-1}{2}$, the margin is
\[
 \frac{(n-4)\{Kn^2-13Kn+18K+n^2-7n\}}{36K}.
\]
It decreases with $K$.  At $K=(2n+1)/3$ it equals
\[
 \frac{(n-4)(n-1)(n^2-10n-9)}{18(2n+1)},
\]
which is positive for $n\ge11$.

Suppose instead that $3(K-1)\ge2(n-1)$.  An exterior point is not collinear
with a pair on a largest circle.  The proof of Lemma~\ref{lem:largest-en}
therefore improves to
\[
 c(P)\ge1+(n-K)\binom K2-\binom{n-K}{2}\left\lfloor\frac K2\right\rfloor
 \ge1+\frac{(n-K)K(3K-n-1)}4.
\]
On $(2n+1)/3\le K\le n-1$, the derivative of the last expression is a
concave quadratic, positive at the left endpoint and negative at the right
endpoint.  Hence its minimum is attained at an endpoint.  Its margins above
$1+\binom{n-1}{2}$ there are, respectively,
\[
 \frac{(n-4)(n-1)(2n-9)}{36}\quad\hbox{and}\quad0.
\]
This proves the result for $n\ge11$.

  The remaining orders require only short arguments.  For $n=4$, the four
  triples determine four distinct circles, since putting two triples on one
  circle would make all four points concyclic.  For $n=5$, at most one
  four-point circle can occur, so partitioning the ten triples among the
  circles gives at least $10-3=7$ circles.  For $n=6$, if there is a
  five-point circle, the improved largest-circle bound gives eleven.  Otherwise
  every nonordinary circle has four points.  Two such circles share at most two
  points, so their complementary pairs in the six-point set are disjoint;
  hence there are at most three four-point circles.  Triple counting again
  gives $c(P)\ge20-3\cdot3=11$.  For $n=7$, the only potentially
smaller case would consist of seven four-point circles.  Up to relabelling
these are the complements of the lines of the Fano plane.  Inverting at one
point gives the four lines $123,145,246,356$ and the three circles
$1256,1346,2345$.  Normalize
\[
 p_1=(0,0),\quad p_2=(1,0),\quad p_3=(t,0),\quad
 p_4=(u,v),\quad p_5=a(u,v),
\]
with $p_6=p_2p_4\cap p_3p_5$.  After nonzero incidence factors are removed,
the three circle determinants are
\[
 -a(u^2+v^2)+2tu-t,\qquad
 a(u^2+v^2)-2au+t,\qquad
 -a(u^2+v^2)+t.
\]
They imply successively $u=1$, $v=0$, and $a=t$, contradicting
noncollinearity.  The finite classification up to relabelling is reproduced by
\file{computation/no_three_collinear/verify_no_three_collinear.py}; hence
$c_{\mathrm{nc}}(7)=16$.

For $n=9$, if every circle has at most four points and $c_4$ circles have four
points, then $c(P)=\binom93-3c_4$.  To bound $c_4$, fix a point.  The
four-point circles through it induce three-subsets of the other eight points
with no repeated pair, so their number is at most
$\lfloor(8/3)\lfloor7/2\rfloor\rfloor=8$.  Summing this bound over the nine
points and dividing by four gives
\[
 c_4\le\left\lfloor\frac94
 \left\lfloor\frac83\left\lfloor\frac72\right\rfloor\right\rfloor
 \right\rfloor=18,
\]
so $c(P)\ge30$.  A circle with at least five points is covered by the
improved largest-circle bound, whose minimum is the required $29$.

For $n=10$, the same largest-circle bound deals with $K\ge6$.  If $K\le5$,
invert at $x$ and write $(t_2,t_3,t_4)$ for the multiplicities of connecting
lines among the nine image points.  Pair counting, Melchior's inequality, and
Hirzebruch's inequality give
\[
 t_2+3t_3+6t_4=36,\qquad t_2\ge3+t_4,\qquad t_2\ge2t_4.
\]
Since $t_2\equiv0\pmod3$, direct substitution for $0\le t_4\le4$ yields
\[
 \frac{t_2}{3}+\frac{t_3}{4}+\frac{t_4}{5}\ge\frac{18}{5},
\]
with equality only at $(t_2,t_3,t_4)=(6,4,3)$.  If $c(P)\le36$, equality
must hold at all ten inversion centres.  Consequently there would be six
five-point circles, each point would lie on three of them, and every pair of
these circles would meet in exactly two points.  The resulting
$2$-$(6,3,2)$ incidence design has one relabelling orbit, as verified by the
exact finite enumeration in
\file{computation/no_three_collinear/verify_no_three_collinear.py}.

Invert at a point incident with circles $0,1,4$.  The remaining nine points
lie four at a time on the three sides of a triangle.  After an affine
normalization, write the two additional points on the three sides as
\[
 (u,0),(v,0),\qquad(0,s),(0,t),\qquad(p,1-p),(r,1-r),
\]
and retain the general positive-definite quadratic part
$x^2+2kxy+hy^2$.  The first two five-point-circle equations give
$hs=uv$ and $hst=u$, hence $t=1/v$ and $h=uv/s$.  Eliminating $k$ from the
remaining equations gives
\[
 p(s+1)=1,\qquad r(u+1)=u,\qquad (v-1)(su-1)=0.
\]
Distinctness gives $v\ne1$, so $su=1$ and therefore $p=r$, again a repeated
point.  Thus $c_{\mathrm{nc}}(10)=37$.

It remains to determine the exceptional order eight.  Begin with the two
concentric squares $(\pm1,\pm1),(\pm2,\pm2)$, which determine eighteen
circles and two maximal lines.  Invert about $z=(0,10)$.  The point $z$ lies
on none of these twenty generalized circles, so all twenty become circles and
the image has no collinear triple.  Explicitly, the image points are
\[
\begin{gathered}
(1/82,811/82),(1/122,1209/122),(-1/82,811/82),(-1/122,1209/122),\\
(1/34,168/17),(1/74,367/37),(-1/34,168/17),(-1/74,367/37).
\end{gathered}
\]
Figure~\ref{fig:no-three-eight-en} shows a translated and uniformly rescaled
copy of these exact coordinates.  With the index-string convention of
Section~2, its twenty circles are
\begin{gather*}
0123,0145,0167,0246,0257,0347,035,036,056,124,\\
1256,127,1346,1357,147,2345,2367,247,356,4567.
\end{gather*}
\begin{figure}[ht]
\centering
\begin{tikzpicture}[scale=0.82,every node/.style={font=\small}]
  \coordinate (p0) at (1.2195,1.0244);
  \coordinate (p1) at (0.8197,2.9836);
  \coordinate (p2) at (-1.2195,1.0244);
  \coordinate (p3) at (-0.8197,2.9836);
  \coordinate (p4) at (2.9412,0.2353);
  \coordinate (p5) at (1.3514,3.8919);
  \coordinate (p6) at (-2.9412,0.2353);
  \coordinate (p7) at (-1.3514,3.8919);
  \fill (p0) circle (2.2pt) node[right] {$p_0$};
  \fill (p1) circle (2.2pt) node[right] {$p_1$};
  \fill (p2) circle (2.2pt) node[left] {$p_2$};
  \fill (p3) circle (2.2pt) node[left] {$p_3$};
  \fill (p4) circle (2.2pt) node[right] {$p_4$};
  \fill (p5) circle (2.2pt) node[above right] {$p_5$};
  \fill (p6) circle (2.2pt) node[left] {$p_6$};
  \fill (p7) circle (2.2pt) node[above left] {$p_7$};
\end{tikzpicture}
\caption{The eight-point construction for the no-three-collinear variant.
Only the points are drawn; displaying all twenty circles would obscure the
configuration.}
\label{fig:no-three-eight-en}
\end{figure}
Hence $c_{\mathrm{nc}}(8)\le20$.

For the lower bound, circles of size at least five are again excluded by the
largest-circle estimate.  If all circles have size at most four, then
$c(P)=56-3c_4$.  Theorem~\ref{thm:main-en} gives $c(P)\ge17$, so a value below
twenty would have to be seventeen.  The exact four-subset classification has
one line-free class at that value.  After inversion and relabelling it has the
six lines
\[
036,045,135,146,234,256
\]
and the seven required circles
\[
0134,0156,0235,0246,1236,1245,3456.
\]
Normalize $p_0=(0,0)$, $p_1=(1,0)$, $p_3=(0,1)$, and write
$p_6=(0,a)$ and $p_5=(b,1-b)$.  The line incidences determine the other
points.  For the general quadratic part $x^2+2kxy+hy^2$, the first two circle
conditions give
\[
 h=-\frac{b}{a-b+1},\qquad k=\frac{1-b}{a-b+1}.
\]
After these substitutions, the numerator of the $3456$ circle determinant is
\[
 2ab^2(a-1)^2(b-1)(ab-b+1),
\]
whose factors are all nonzero by point distinctness and the stated line
incidences.  The seventeen-circle class is therefore not realizable without a
collinear triple.  This proves $c_{\mathrm{nc}}(8)=20$ and completes the proof.
\end{proof}

\begin{remark}
The packing classifications and every polynomial identity in the preceding
proof, as well as the rational eight-point construction, are reproduced by
\file{computation/no_three_collinear/verify_no_three_collinear.py}.  The script uses
exact integer and symbolic arithmetic; no numerical root decision is used.
\end{remark}

\appendix
\section{Exact verification and reproduction}\label{app:verification-en}

The audited material is reproduced by two independent entry points.  From the
\file{Erdos506} directory, run
\begin{center}
\file{computation/verify_certified_results.py}
\end{center}
\begin{verbatim}
python computation/verify_certified_results.py
\end{verbatim}
This command rechecks the mathematical identities for $n\ge15$, the exact
constructions and classifications for $n\le8$, and
Corollary~\ref{thm:no-three-en}.  The six intervening layers are
reproduced separately by
\begin{verbatim}
cd computation/n9_to_14
python run_verification.py --jobs 8 --seconds 240
\end{verbatim}
as described below.  Neither entry point accepts random search, floating-point
feasibility, a timeout, or an unknown solver status as an exclusion certificate.
Both require CPython~3.13, matching the audited binary modules; SymPy performs exact polynomial algebra, and the
project-local OR-Tools version 9.15.6755 is pinned and loaded only through
\file{computation/runtime_dependencies.py}.

\subsection{The mathematical range}

\begin{longtable}{P{6.4cm}P{8.1cm}}
\toprule File & Independently checked assertion\\ \midrule
\file{computation/n_ge_17/verify_n_ge_17.py} &
expands the coefficient differences in
Lemma~\ref{lem:bojanowski-local-en}, the two parity polynomials, the endpoint
margins, and the largest-block algebra, all over the rationals\\
\file{computation/n16/verify_n16.py} &
expands the two identities in the $K=7,8$ branches, checks all residual
coefficients, the multiplicities forced at equality, the sums of pairwise
circle-intersection counts, and the
direct $K=9,\ldots,15$ bounds\\
\file{computation/n9_to_14/verify_largest_block_reduction.py} &
checks the closed-form line and circle bounds for every largest-block size
$K\ge8$ in the fifteen-point case\\
\file{computation/n15/verify_n15.py} &
checks the local residual values from Cuntz's fourteen-line table, the
positive-defect estimate, the coefficients of
\eqref{eq:n15-global-identity-en}, and the final congruence obstruction; it also
records the SHA-256 digest used to identify the cited source PDF\\
\bottomrule
\end{longtable}
These scripts verify algebra in proofs already given in the text.  The
fifteen-point check reads only the three published multiplicity vectors listed
in the proof; it does not choose labelled point subsets or enumerate orbits or
coordinates.

The arithmetic and finite-sum implications used for $n\ge15$ are also
formalized in
\begin{center}
\file{lean/Erdos506Nge15.lean}.
\end{center}
The file is compiled against the pinned Lean~4 and Mathlib toolchain.  It
contains no \texttt{sorry}, \texttt{admit}, custom axiom declaration, or unsafe
definition.  Its scope is deliberately explicit: the Euclidean and
real-projective incidence theorems cited in the text are hypotheses of the
corresponding Lean statements, while every algebraic consequence drawn from
those hypotheses is checked by the kernel.  Thus this file is a formalization
of the arithmetic-combinatorial layer of the proof, not a formal construction
of the underlying projective geometry.

The analogous arithmetic-combinatorial formalization of
Corollary~\ref{thm:no-three-en} is
\begin{center}
\file{lean/Erdos506NoThreeCollinear.lean}.
\end{center}
It kernel-checks the six- and nine-point packing deductions, the ten-point
local equality case, both large-order endpoint factors, the eight-point
congruence gap, and the final case split for every $n\ge4$.  The accompanying
\file{lean/compile_audit_no_three_collinear.json} records a successful warning-as-error build with
Lean~4.29.1 and Mathlib commit \texttt{5e932f97}; the source contains no proof
placeholder or added axiom.  As above, Euclidean incidence lemmas and exhaustive
finite-classification outputs remain explicit hypotheses rather than silently
postulated definitions.

\subsection{Nine through fourteen points}

The source files for these six layers are in
\begin{center}
\file{computation/n9_to_14/}.
\end{center}
From that directory run
\begin{verbatim}
python run_verification.py --jobs 8 --seconds 240
\end{verbatim}
The number of workers may be changed without changing the models.  Every
individual solver uses a fixed random seed; a timeout has status
\texttt{UNKNOWN} and is retained.  A successful replay ends with
\file{certificates/manifest.json}, whose seven terminal checks are
\texttt{true} and whose status is \texttt{PASS}.  If a replay is interrupted
after all terminal certificates have been written, the same terminal and
SHA-256 audit can be rerun with \file{audit_manifest.py}.
The active lower-bound table contains $c(7)=11$.  All certificates in the
present package were regenerated after that correction; in particular, the
twelve-point recursion now has 622, rather than 617, nine-point child
count vectors.

The files are grouped by mathematical role as follows.
Some filenames retain implementation terminology: in those names,
\texttt{signature} means the global count vector defined in
Section~\ref{sec:small-uniform-en}, \texttt{profile} means the degrees attached
to point pairs, and \texttt{support} means an actual subset of labelled points.
These words are not additional mathematical notions.
\begin{longtable}{P{6.2cm}P{8.3cm}}
\toprule File & Role in the verification\\ \midrule
\file{signature_filters.py} & enumerates all global count vectors from
\eqref{eq:small-line-pairs-en}--\eqref{eq:small-triple-partition-en}, the
largest-block bounds, deletion averages, and the cited line-arrangement
inequalities\\
\file{projected_arrangement_model.py} & generates all numerical local
multiplicity types and their line/circle splits; the final source contains no
historical line-deletion obstruction table\\
\file{verify_local_type_filter.py} & checks exact unlabelled sums of the local
multiplicity vectors and writes \file{nN_01_signature_filter.json}\\
\file{verify_augmented_line_melchior.py} & checks first
\eqref{eq:augmented-line-summed-en}, then the sums over all points required by
\eqref{eq:augmented-line-point-en}; it writes certificates 02 and 03\\
\file{filter_by_classified_split_endpoints.py} & checks the consistency
identities linking point degrees and point-pair degrees in
\eqref{eq:small-category-totals-en}--\eqref{eq:small-pair-profile2-en};
the runner explicitly disables both optional finite catalogues\\
\file{pair_profile_generator.py},
\file{verify_pair_moment_filter.py} & generate all point-pair degree data and
impose the exact sums of intersection sizes and of their second binomial
coefficients, together with the disjoint-triple counts; they write
certificate 05\\
\file{block_intersection_rows.py},
\file{block_row_existence_filter.py} & at $n=10$, impose the three exact
triple strata around each represented block and the bound on matchings between
two blocks, reducing 21 count vectors to 12\\
\file{verify_recursive_inversion_rows.py} & maps each local multiplicity vector
to the full count vector one order lower, balances all point totals at each
level, and at the nine-point base selects the maximal lines and circles as
subsets of the labelled points\\
\file{verify_n10_final_radical_axis.py} & reconstructs the two matching
orbits, the unique surviving line-design orbit, and the five exact minors that
exclude the final ten-point count vector; it first reads certificate 07, records
its SHA-256 digest, and checks that the retained rows force the two five-point
circles to partition the point set\\
\file{verify_n13_n14_global_inequalities.py} & independently regenerates the
3962 and 8981 raw count vectors and checks the global exclusions and
\eqref{eq:n13-global-two-ineq-en}; it does not select labelled point subsets\\
\file{run_verification.py}, \file{audit_manifest.py} & execute the stages in
order and check all terminal statuses, the unique intermediate $n=10$ count
vector, file sizes, and SHA-256 digests\\
\bottomrule
\end{longtable}

The terminal certificate paths are
{\raggedright
\begin{description}[style=nextline,leftmargin=2.8em]
\item[$n=9$] \file{certificates/n9_04_exact_support.json}.
\item[$n=10$] \file{certificates/n10_07_recursive.json} and
  \file{certificates/n10_08_final_geometry.json}.
\item[$n=11$] \file{certificates/n11_06_recursive.json}.
\item[$n=12$] \file{certificates/n12_06_recursive.json}.
\item[$n=13,14$] \file{certificates/n13_n14_global_inequalities.json}.
\end{description}
\par}
The intermediate filenames have the numerical prefixes displayed in the
table and are consumed in that order.  Each stage after 01 records the path
and SHA-256 digest of its immediate input whenever it reads one.  The final
manifest records forty-two source and certificate files.  Thus a reader may
delete the \file{certificates} directory and regenerate the complete chain
from the listed source files.

In the audited four-worker replay, the full chain took 348.52 seconds of wall
time.  The largest labelled base model had 840 Boolean variables and 546
constraints, and the slowest individual recursive solve took 6.62 seconds.
No solve approached the 240-second limit.  These figures are reported only to
indicate scale; they are not assumptions in any certificate.

The reproduction uses CPython~3.13 because the audited binary modules are its
CPython~3.13 builds.  The pinned integer solver is OR-Tools 9.15.6755, loaded through
\file{computation/runtime_dependencies.py}.  SymPy is used only for the exact
ten-point polynomial calculations.  Floating-point feasibility, random
search, and a solver status other than explicit infeasibility are never used
to exclude a case.

\subsection{Six and seven points}

\file{computation/n4_to_8/n6/verify_construction.py} checks the six-point coordinates,
maximal lines, circle equations, and triple ownership exactly in
$\mathbb Q(\sqrt3)$.  The finite parameter check
\file{computation/n4_to_8/n6/verify_lower_bound.py} reproduces the last
case of the six-point lower-bound argument.

For seven points the reproduction order is:
\begin{enumerate}
\item \file{computation/n4_to_8/n7/classify_quad_packings.py} classifies the families of at least
five four-point circles.  It returns two classes with five circles and one class
each with six and seven circles.
\item \file{computation/n4_to_8/n7/classify_line_extensions.py} proves that only the six- and
seven-circle families can have $c(P)\le10$.  It checks that the unique
six-circle candidate has no ordinary line and that every seven-circle candidate
contains the common two-line core used in the proof.
\item \file{verify_q7_exclusion.py}
checks the positive-definite quadratic-form reduction in the text, while
\file{verify_extremal_configuration.py} verifies the eleven-circle construction
using exact Euclidean circle equations.
\item \file{verify_circle_count_spectrum.py} reruns the incidence classification
at circle counts eleven and twelve and checks the four symbolic reductions used
to verify the eleven-circle layer and prove nonexistence at twelve circles.
\end{enumerate}
The quadratic-form script asserts the two rational factorizations used in the
proof and records every nonzero denominator.  It performs no root approximation
and no search over coordinates.  This formulation is invariant under the affine
normalization: the former standard-circle normalization, which did not retain the
cross term and the second diagonal coefficient, is not used.
All paths in this paragraph are under
\file{computation/n4_to_8/n7/}.

\subsection{Eight points}

All paths below are under
\file{computation/n4_to_8/n8/}.
\begin{enumerate}
\item Lemma~\ref{lem:largest-en}, audited by
\file{computation/n9_to_14/verify_largest_block_reduction.py},
first excludes every line or circle containing at least five points.
The counting argument in the proof first excludes circle counts below twelve.
\file{classify_boundary_layers.py} then constructs the two exhaustive branches
for every remaining circle count $12,\ldots,16$: a four-point line is fixed up to relabeling,
or no four-point line exists.  Besides triple ownership and the intersection
bounds, the model uses only the orchard bound $\ell_3\le7$, the identity
$\ell_2=28-3\ell_3-6\ell_4$, and Melchior's inequality
$\ell_2\ge3+\ell_4$.
Full relabeling-orbit blocking proves completeness.  The certificate
\file{boundary_layer_classification.json} records respectively
$1,1,1,3,3$ classes for counts $12,\ldots,16$.
\item The geometric exclusions are independent of the combinatorial solver.
\file{pattern1_inversion_relation.py} and
\file{pattern1_inversion_circles.py} verify the exceptional fifteen-circle
inversion equations.  The two files
\file{no_four_line_inversion_relations.py} and
\file{no_four_line_key_equations.py} verify the exceptional sixteen-circle
equations.  The remaining classes are excluded in the text by the
radical-axis assertion in Lemma~\ref{lem:two-line-geometry-en}.
For the exceptional fifteen-circle class, inversion at the intersection of its
two prescribed lines gives axes on which
$0,1,2=(x_0,0),(x_1,0),(x_2,0)$ and
$4,5=(0,y_4),(0,y_5)$.  The Euclidean quadratic form is
$x^2+y^2+2kxy$, where $k^2\ne1$.  The collinearity and first circle equations
verified by the two scripts give
\[
R=x_0x_1-2x_0x_2+x_1x_2=0,\qquad x_0x_2=y_4y_5.
\]
If $x_0x_2=y_4^2$, then $y_4=y_5$, so this factor is nonzero.  The next two
circle equations therefore reduce, with
$A=x_0x_2+y_4^2$ and $B=y_4(x_0+x_2)$, to $A=kB$ and $kA=B$.
Thus $A=B=0$, whence $x_0+x_2=0$; substituting in $R=0$ gives
$2x_0^2=0$, contrary to distinctness.  This is the complete geometric
consequence of the symbolic output used for that class.
\item Finally, \file{verify_construction.py} verifies the rational
seventeen-circle construction by exact determinants and checks ownership of all
$56$ triples.
\end{enumerate}

\subsection{The no-three-collinear variant}

The single file
\file{computation/no_three_collinear/verify_no_three_collinear.py} independently checks
all finite and algebraic assertions in Corollary~\ref{thm:no-three-en}.  It counts
the twenty circles of the inverted two-square construction, expands both
large-$n$ endpoint margins, enumerates the unique seven- and eight-point
four-subset packings used in the proof, classifies the $2$-$(6,3,2)$ design in
the ten-point equality branch, and verifies the coordinate eliminations.  The
enumerations are over finite labelled subsets and are then compared with the
full relabelling orbits.  All coordinate calculations are exact over the
rationals or in the symbolic polynomial ring.

\begin{remark}[Independent replay]
The classifiers construct their constraints from the full labelled sets of
triples, four-subsets, and five-subsets; the stored JSON files are outputs, not
axioms.  Orbit blocking is checked by applying all point permutations.  Therefore a
reader can delete the generated JSON files and reproduce them from the scripts.
The dependency loader checks the pinned solver version.  The classifiers use one
search worker where isomorphism completeness matters and treat only an exhaustive
terminal status as a certificate.
\end{remark}

\end{document}